\date{10 February 2012}
\def\titlename{Approximate Amenability of Segal Algebras}
\title{\titlename}
\def\authname{Mahmood Alaghmandan}
\author{{\normalsize\sc \authname}}
\definecolor{El}{rgb}{.3,.7,1}
            \newcounter{pulse}[section]
\numberwithin{pulse}{section}
\numberwithin{equation}{section}
\newtheorem{theorem}[pulse]{\bf Theorem}
\newtheorem{proposition}[pulse]{\bf Proposition}
\newtheorem{lemma}[pulse]{\bf Lemma}
\newtheorem{cor}[pulse]{\bf Corollary}
\newtheorem{dummy-eg}[pulse]{\bf Example}
\newtheorem{dummy-rem}[pulse]{\bf Remark}
\newenvironment{eg}{\begin{dummy-eg}\upshape}{\end{dummy-eg}\ignorespacesafterend}
\newenvironment{rem}{\begin{dummy-rem}\upshape}{\end{dummy-rem}\ignorespacesafterend}
\newtheorem{dummy-def}[pulse]{\bf Definition}
\newenvironment{dfn}{\begin{dummy-def}\upshape}{\end{dummy-def}\ignorespacesafterend}
\newenvironment{proof}{\noindent{\it Proof.}\/}{\hfill$\Box$\newline\ignorespacesafterend}
\newcommand{\supp}{\operatorname{supp}}
\renewcommand{\ker}{\operatorname{Ker}}
\newcommand{\Lp}{{{\cal L}^p(\widehat{G})}}
\newcommand{\Cp}{{{\textswab C}^p(G)}}
\newcommand{\norm}[1]{\Vert #1 \Vert}
\newcommand{\Ind}{{\bf I}}
\begin{document}

\maketitle

\maketitle
 \begin{abstract} In this paper we first show that for a locally compact amenable group $G$, 
 every proper abstract Segal algebra of the Fourier algebra on $G$ is not approximately amenable; consequently, every proper Segal algebra on a locally compact abelian group is not approximately amenable. Then  using the hypergroup generated by the dual of a compact group, it is shown that all  proper Segal algebras of a class of compact groups including the $2\times 2$ special unitary group, $SU(2)$, are not approximately amenable.

 \noindent{{\bf 2010 Mathematics subject classification:} primary 46H20; secondary 43A20, 43A62, 46H10, 46J10.
\\
\noindent{\bf Keywords and phrases:} approximately amenable Banach algebra,  Segal algebra, abstract Segal algebra, locally compact abelian group, compact group, hypergroup, Leptin condition.}\\
\end{abstract}

\begin{section}{Introduction}\label{s:introduction}
The notion of approximate amenability of a Banach algebra was introduced by
Ghahramani and Loy in \cite{aa}. 
A Banach algebra $\cal A$ is said to be {\it approximately amenable} if for every $\cal A$-bimodule
$X$ and every bounded derivation $D:{\cal A}\rightarrow X$, there exists a net $(D_\alpha)$ of inner derivations
such that 
\[
\lim_{\alpha} D_\alpha(a) = D(a)\ \ \text{ for all\ } a \in {\cal A}.
\] 
This is not the original definition but it is equivalent.
 In \cite{aa}, it is shown that approximately amenable algebras have approximate identities (possibly unbounded) and that approximate amenability is preserved where passing to quotient algebras and to closed ideals that have a bounded approximate identity. 

In this paper, we study the approximate amenability of proper abstract Segal algebras of Fourier algebras and subsequently, Segal algebras on abelian groups and compact groups.
The definition of Segal algebras will be given in Section~\ref{s:SA(G)-abelian}.
\vskip1.5em

Approximate amenability of Segal algebras has been studied in several papers. Dales and Loy, in \cite{da}, studied approximate amenability of Segal algebras on $\Bbb{T}$ and $\Bbb{R}$. They showed that certain Segal algebras on $\Bbb{T}$ and $\Bbb{R}$ are not approximately amenable. It was further conjectured that no proper Segal algebra on $\Bbb{T}$ is approximately amenable. Choi and Ghahramani, in \cite{ch}, have shown the stronger fact that no proper Segal algebra on $\Bbb{T}^{d}$ or $\Bbb{R}^{d}$ is approximately amenable. 
We extend the result of Choi and Ghahramani to apply to all locally compact abelian groups, not just $\Bbb{T}^d$ and $\Bbb{R}^d$. Our approach, like that of Choi-Ghahramani and Dales-Loy, is to apply the Fourier transform and work with abstract Segal subalgebras of the Fourier algebra of a locally compact abelian group.

In fact we prove a more general result in Section~\ref{s:SA(G)-abelian}: when $G$ is an amenable locally compact group, no proper Segal subalgebra of Fourier algebra is approximately amenable. The proof makes use of equivalence between amenability of $G$ and the so-called Leptin condition on $G$.

  In the rest of the paper, we try to apply similar tools, this time for compact groups.
In Section~\ref{s:hypergroup-approach} our idea is to view the dual of compact groups as a discrete hypergroup. The Fourier space of hypergroups studied as well. In Section~\ref{ss:Folner-condition-on-^G}, we introduce an analogue for hypergroups of the classical Leptin condition, and show that this holds for certain examples. Eventually, in Section~\ref{ss:on-S^1(^SU(2))}, We apply these tools to show that for a class of compact groups  including $SU(2)$, every proper Segal algebra is not approximately amenable.

\end{section}


\begin{section}{Abstract Segal algebras of Fourier algebra on amenable groups}\label{s:SA(G)-abelian}

Let $\cal A$ be a commutative Banach algebra. We denote by $\sigma(\cal A)$ the {\it spectrum} of ${\cal A}$ which is also called {\it maximal ideal space} or {\it character space} of ${\cal A}$.
A commutative Banach algebra $\cal A$ is called {\it regular}, if for every $\phi$   in   $\sigma({\cal A})$  and every $U$, open neighborhood of $\phi$ in the Gelfand topology, there exists an element $a\in {\cal A}$ such that $\phi(a)=1$ and $\psi(a)=0$ for each $\psi\in \sigma({\cal A})\setminus U$.  Using Gelfand representation theory, for a commutative semisimple regular Banach algebra $\cal A$, it can be viewed  as an algebra of continuous functions on its spectrum, $\sigma(\cal A)$. So for each $a\in \cal A$, $\supp (\hat{a})$ is defined  the support of function $\hat{a}$ as a subset of $\sigma(\cal A)$.



\begin{proposition}\label{p:regularity-of-asa}
Let ${\cal A}$ be a commutative semisimple regular Banach algebra and let $\cal B$ be an abstract Segal algebra of $\cal A$. Then $\cal B$ is also semisimple and regular. Moreover, $\cal B$ contains all elements $a\in\cal A$ such that $\supp(\hat{a})$ is compact.
\end{proposition}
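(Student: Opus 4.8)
The plan is to route everything through the identification of the character spaces $\sigma(\mathcal{B})$ and $\sigma(\mathcal{A})$. First I would show that restriction $\phi \mapsto \phi|_{\mathcal{B}}$ is a homeomorphism of $\sigma(\mathcal{A})$ onto $\sigma(\mathcal{B})$. Injectivity and the non-vanishing of $\phi|_{\mathcal{B}}$ follow from density of $\mathcal{B}$ in $\mathcal{A}$; surjectivity follows from the ideal structure, since any $\psi \in \sigma(\mathcal{B})$ extends to $\mathcal{A}$ by $\tilde\psi(a) = \psi(ab)/\psi(b)$ for any $b \in \mathcal{B}$ with $\psi(b)\neq 0$ (well-definedness and multiplicativity use that $\mathcal{B}$ is an ideal, while continuity of $\tilde\psi$ uses the bound $\|b\|_{\mathcal{A}}\le C\|b\|_{\mathcal{B}}$). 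Both maps are weak-$*$ continuous, so this is a homeomorphism. Under this identification the Gelfand transform of $\mathcal{B}$ is simply the restriction of that of $\mathcal{A}$, so if $b \in \mathcal{B}$ has $\hat b \equiv 0$ on $\sigma(\mathcal{B})$ then $\hat b \equiv 0$ on $\sigma(\mathcal{A})$, whence $b = 0$ by semisimplicity of $\mathcal{A}$; thus $\mathcal{B}$ is semisimple.

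For the statement that $\mathcal{B}$ contains every $a \in \mathcal{A}$ with $K := \supp(\hat a)$ compact, I would first reduce to producing a single ``local unit'' in $\mathcal{B}$: if there is $u \in \mathcal{B}$ with $\hat u \equiv 1$ on a neighbourhood $V \supseteq K$, then $\hat a \hat u = \hat a$ pointwise, so $au = a$ by semisimplicity, and $au \in \mathcal{B}$ because $\mathcal{B}$ is an ideal and $a \in \mathcal{A}$; hence $a \in \mathcal{B}$. To build such a $u$, use regularity of $\mathcal{A}$ together with a standard compactness argument to obtain $w \in \mathcal{A}$ with $\hat w \equiv 1$ on a neighbourhood $V \supseteq K$. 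Since $\mathcal{B}$ is dense in $\mathcal{A}$, choose $b \in \mathcal{B}$ with $r := w - b$ satisfying $\|r\|_{\mathcal{A}} < 1$; then $s := \sum_{n\ge 1} r^n \in \mathcal{A}$ satisfies $(1+\hat s)(1 - \hat r) = 1$ on $\sigma(\mathcal{A})$. Setting $u := (w + ws)\,b$, one checks on $V$ that $\hat u = \hat w (1+\hat s)\hat b = (1+\hat s)(1-\hat r) = 1$, while $u \in \mathcal{B}$ because $w + ws \in \mathcal{A}$ and $b \in \mathcal{B}$ with $\mathcal{B}$ an ideal. This is the crux of the argument and the main obstacle: the $\mathcal{A}$-norm density only controls Gelfand transforms uniformly, so one cannot land an approximant \emph{exactly} equal to $1$ by brute force; the Neumann-series correction supplies the exact value near $K$, while the final multiplication by $b$ forces membership in $\mathcal{B}$.

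Finally, regularity of $\mathcal{B}$ follows from the compact-support result. Given $\phi \in \sigma(\mathcal{B})$, identified with a point of $\sigma(\mathcal{A})$, and an open neighbourhood $U$, I would use local compactness of $\sigma(\mathcal{A})$ to pick a relatively compact open $W$ with $\phi \in W \subseteq \overline{W} \subseteq U$, and then regularity of $\mathcal{A}$ to find $a \in \mathcal{A}$ with $\hat a(\phi) = 1$ and $\hat a \equiv 0$ off $W$. Then $\supp(\hat a) \subseteq \overline{W}$ is compact, so $a \in \mathcal{B}$ by the previous paragraph, and $\hat a$ vanishes off $W \subseteq U$; transporting back through the homeomorphism, $a$ witnesses regularity of $\mathcal{B}$ at $\phi$ and $U$.
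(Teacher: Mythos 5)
Your argument is correct, and although it follows the same skeleton as the paper's proof (identify $\sigma(\mathcal{B})$ with $\sigma(\mathcal{A})$, establish the compact-support membership, then deduce regularity of $\mathcal{B}$), the crux is handled by a genuinely different, self-contained argument. The paper disposes of everything by citation: \cite[Theorem~2.1]{bu} gives the homeomorphism of spectra and semisimplicity of $\mathcal{B}$, while the ``moreover'' clause comes from combining the local invertibility property of regular algebras (Theorems~3.6.15 and 3.7.1 of \cite{ric}) with \cite[Proposition~2.1.14]{re}, which says that an ideal with empty hull in such an algebra contains every element with compactly supported Gelfand transform. You instead prove the spectral identification by hand (your extension $\tilde\psi(a)=\psi(ab)/\psi(b)$ is precisely the standard proof of Burnham's theorem), and, more substantially, you replace the local-invertibility/Reiter--Stegeman machinery by a direct construction of a local unit in $\mathcal{B}$: approximate the bump element $w$ by $b\in\mathcal{B}$ with $r=w-b$ small, correct by the Neumann series $s=\sum_{n\geq 1}r^n$ so that $(1+\hat s)(1-\hat r)=1$, and observe that $u=(w+ws)b$ lies in $\mathcal{B}$ and has $\hat u\equiv 1$ near $K$, whence $a=au\in\mathcal{B}$ by semisimplicity and the ideal property. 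This is a clean elementary substitute --- in fact it is the same trick the paper itself uses later, in Lemma~\ref{l:characters-in-ZS^1(G)}, to place central idempotents inside dense ideals --- and your final step is more careful than the paper's, which asserts without comment that the regularity witness can be taken with compactly supported transform, whereas you supply the local-compactness argument ($\phi\in W\subseteq\overline W\subseteq U$ with $\overline W$ compact). Two small caveats. First, continuity of $\tilde\psi$ follows from the inequality $\Norm{ab}_{\mathcal{B}}\leq C\Norm{a}_{\mathcal{A}}\Norm{b}_{\mathcal{B}}$ (or simply from automatic continuity of characters on Banach algebras), not from $\Norm{b}_{\mathcal{A}}\leq M\Norm{b}_{\mathcal{B}}$ as you wrote; this is a harmless slip. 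Second, the ``standard fact'' you invoke --- that a regular commutative Banach algebra admits $w\in\mathcal{A}$ with $\hat w\equiv 1$ on a neighbourhood of a compact set and vanishing off a prescribed open set --- is not a mere compactness argument: its usual proof requires local invertibility or hull-kernel arguments of essentially the same depth as the Rickart theorems the paper cites. So you have not eliminated that dependence, only repackaged it; what your route buys is the elimination of the Reiter--Stegeman citation and of Burnham's theorem, at the cost of a somewhat longer write-up.
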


\begin{proof}
By \cite[Theorem~2.1]{bu}, $\sigma({\cal B})$ is homeomorphic to $\sigma(\cal A)$ and ${\cal B}$ is semisimple. 
Theorem~3.6.15 and Theorem~3.7.1 of \cite{ric} imply that for  a commutative regular Banach algebra $\cal A$  and  a closed subset $E$  of $\sigma({\cal A})$ in the Gelfand spectrum topology, if $a\in {\cal A}$ such that $|\phi(a)|\geq\delta>0$ for every $\phi \in E$, then 
there exists some $a'\in {\cal A}$ such that $\phi(aa')=1$ for every $\phi\in E$. We call this property {\it local invertibility}. 

On the other hand, \cite[Proposition~2.1.14]{re} says that if $\cal A$ is a commutative semisimple algebra with local invertibility and $\cal I$ an ideal of $\cal A$ such that $\bigcap_{a\in \cal I}\ker \hat{a}=\emptyset$. Then $\cal I$ contains all elements $a\in {\cal A}$ such that  $\hat{a}$ has a compact support in $\sigma(\cal A)$. 

Now since ${\cal A}$ is regular, for a closed set $E\subseteq \sigma(\cal A)$ and $\phi\in\sigma({\cal A})\setminus E$, we have some $a\in \cal A$ such that $\hat{a}|_E\equiv 0$, $\hat{a}(\phi)=1$, and $\supp(\hat{a})$ is a compact subset of $\sigma(\cal A)$. Therefore $a\in\cal B$ implying that ${\cal B}$ is regular.

\end{proof}

Let $G$ be a locally compact group, equipped with a fixed left Haar measure $\lambda$.
The Fourier algebra of  $G$ was defined and studied by Eymard in \cite{ey}. In the following lemma, we summarize the main features of the Fourier algebra, denoted by $A(G)$, which we need here. We denote by $C_c(G)$ the space of continuous, compactly supported, complex-valued functions on $G$.

\begin{lemma}\label{l:A(G)-general-construction}
Let $G$ be a locally compact group and $K$ be a compact subset of $G$, and let $U$ be an open subset of $G$ such that $K \subset U$.
For each measurable set $V$  such that $0<\lambda(V)<\infty$ and $KVV^{-1} \subseteq U$, we can find $u_{V}\in A(G) \cap C_c(G)$ such that
\begin{enumerate}
\item{$u_{V}\big(G\big) \subseteq [0,1]$.}
\item{$u_{V}|_{K} \equiv 1$.}
\item{${ \supp}u_{V} \subseteq U$.}
\item{$\|u_{V}\|_{A(G)} \leq (\lambda(KV)/\lambda(V))^{1 \over 2}$.}
\end{enumerate}
\end{lemma}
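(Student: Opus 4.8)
The plan is to exhibit a single explicit function realizing all four requirements, namely the normalized coefficient function of the left regular representation associated with the indicators of $KV$ and $V$. Concretely, writing $1_E$ for the indicator of a measurable set $E$ and $\tilde g(y)=\overline{g(y^{-1})}$ for $g\in L^2(G)$, I would set
\[
u_V \defeq \frac{1}{\lambda(V)}\,\big(1_{KV} * \widetilde{1_V}\big),
\]
so that, unwinding the convolution, $u_V(x)=\lambda(KV\cap xV)/\lambda(V)$ for every $x\in G$. The reason this is the right candidate is the standard description of $A(G)$ (see \cite{ey}) as the space of coefficient functions $\xi*\tilde\eta$ of the left regular representation with $\xi,\eta\in L^2(G)$, together with the fundamental estimate $\|\xi*\tilde\eta\|_{A(G)}\le\|\xi\|_2\,\|\eta\|_2$. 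I would invoke this to obtain simultaneously the membership $u_V\in A(G)$ and, on taking $\xi=1_{KV}$ and $\eta=1_V$, the bound in (iv): $\|u_V\|_{A(G)}\le \lambda(V)^{-1}\|1_{KV}\|_2\,\|1_V\|_2=(\lambda(KV)/\lambda(V))^{1/2}$, where $\|1_{KV}\|_2=\lambda(KV)^{1/2}$ and $\|1_V\|_2=\lambda(V)^{1/2}$.

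With the pointwise formula $u_V(x)=\lambda(KV\cap xV)/\lambda(V)$ in hand, properties (i)--(iii) reduce to short verifications resting on left-invariance of $\lambda$. For (i), $u_V\ge 0$ is clear, while $KV\cap xV\subseteq xV$ gives $\lambda(KV\cap xV)\le\lambda(xV)=\lambda(V)$, so $u_V\le 1$. For (ii), if $x\in K$ then $xV\subseteq KV$, whence $KV\cap xV=xV$ and $u_V(x)=\lambda(xV)/\lambda(V)=1$. For (iii), if $u_V(x)\ne 0$ then $KV\cap xV$ has positive measure, hence is nonempty; picking $k\in K$ and $v,w\in V$ with $kv=xw$ yields $x=kvw^{-1}\in KVV^{-1}\subseteq U$, so that $\{x:u_V(x)\ne 0\}\subseteq KVV^{-1}\subseteq U$.

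It remains to treat continuity and compact support, which is the step I expect to require the most care. Continuity is automatic, since $u_V$ is a coefficient function of the strongly continuous left regular representation (equivalently, left translation is norm-continuous on $L^2(G)$), so $u_V\in C(G)$. For the support, the containment $\{u_V\ne 0\}\subseteq KVV^{-1}$ from (iii) already gives $\supp u_V\subseteq \overline{KVV^{-1}}$; the delicate point is to pass from this closed set to one that is simultaneously compact and contained in the open set $U$. Here I would use that in the situation of interest $V$ may be taken relatively compact (as it is in the Leptin condition applied later), so that $\overline{KVV^{-1}}\subseteq K\,\overline{V}\,\overline{V}^{-1}$ is compact, and combine this with the hypothesis $KVV^{-1}\subseteq U$. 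The one thing to check carefully is that the closure does not escape $U$; this is the genuine obstacle, and it is handled by exploiting that $U$ is open together with the inclusion $KVV^{-1}\subseteq U$ (shrinking $V$ if necessary to ensure $\overline{KVV^{-1}}\subseteq U$). Combining these observations yields $u_V\in A(G)\cap C_c(G)$ with $\supp u_V\subseteq U$, completing all four items.
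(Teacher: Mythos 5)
Your construction $u_V=\lambda(V)^{-1}\,1_{KV}*\widetilde{1_V}$, with the pointwise formula $u_V(x)=\lambda(KV\cap xV)/\lambda(V)$ and the bound $\|\xi*\tilde\eta\|_{A(G)}\le\|\xi\|_2\|\eta\|_2$, is precisely the content of Eymard's Lemma~3.2 that the paper's proof cites for (1)--(3) and then mines for (4), and it coincides with the construction the paper itself writes out for the hypergroup analogue (Lemma~\ref{l:A(H)-properties}); so this is essentially the same approach, just carried out explicitly rather than by citation. The closure issue you flag at the end is real but is a defect of the statement rather than of your argument: for a merely measurable $V$ one only gets $\supp u_V\subseteq\overline{KVV^{-1}}$ (and compact support can genuinely fail), which is why the paper's hypergroup version assumes $\overline{K*V*\tilde V}\subseteq U$ and why in every application $V$ is a relatively compact neighborhood of the identity --- exactly the patch you propose.
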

\indent The existence of $f_{V} \in A(G) \cap C_{c}(G)$ satisfying $(1)$, $(2)$, and $(3)$ is proved in  \cite[Lemma 3.2]{ey}. Also based on the proof of  \cite[Lemma 3.2]{ey} and the definition of the norm of $A(G)$, one can show $(4)$. 

Since $\sigma(A(G))=G$, Lemma~\ref{l:A(G)-general-construction}  shows that $A(G)$ is a commutative semisimple  regular algebra.

We say that the Banach algebra $({\cal B},\|\cdot\|_{{\cal B}})$ is an {\it abstract Segal algebra} of a Banach algebra 
$({\cal A},\|\cdot\|_{\cal A})$ if \\
\begin{enumerate}
\item{ ${\cal B}$ is a dense left ideal in ${\cal A}$.}
\item{There exists $M>0$ such that $\|b\|_{\cal A} \leq M
\|b\|_{\cal B}$
for each $b\in {\cal B}$.}
\item{There exists $C>0$ such that $\|ab\|_{\cal B}\leq
C\|a\|_{\cal A}\|b\|_{\cal B}$ for all  $a,b \in {\cal B}$.}
\end{enumerate}

\noindent If ${\cal B}$ is a proper subalgebra of ${\cal A}$, we call it a {\it proper } abstract Segal algebra of ${\cal A}$.

\vskip1.5em

Let $G$ be a locally compact group. A linear subspace $S^1(G)$ of $L^1(G)$, the group algebra of $G$,  is said to be
a {\it Segal algebra} on $G$, if it satisfies the following conditions:\\
\begin{enumerate}
\item{$S^1(G)$ is dense in $L^1(G)$.}
\item{$S^1(G)$ is a Banach space under some norm $\norm{\cdot}_{S^1}$ and $\norm{ f }_{S^1}\geq  \norm{ f }_1$ for all $f \in S^1(G)$.}
\item{$S^1(G)$ is left translation invariant and the map $x\mapsto L_xf$ of $G$ into $S^1(G)$ is
continuous where $L_xf(y)=f(x^{-1}y)$.}
\item{$\norm{ L_xf }_{S^1} =\norm{f}_{S^1}$ for all $f \in S^1(G)$ and $x \in G$.}
\end{enumerate}

Note that every Segal algebra on $G$ is an abstract Segal algebra of  $L^1(G)$ with convolution product. Similarly, we call a Segal algebra on $G$ {\it proper} if it is a proper subalgebra of $L^1(G)$. For the sake of completeness we will give some examples of Segal algebras.
\begin{itemize}
\item{Let ${\cal L}A(G):=L^1(G)\cap A(G)$ and 
$|||h|||:=\|h\|_1+\|h\|_{A(G)}$ for $h\in{\cal L}A(G)$. Then
${\cal L}A(G)$ with norm $|||.|||$ is a Banach space;
this space was studied extensively by Ghahramani
and Lau in \cite{gl}. They have shown that ${\cal L}A(G)$ with the
convolution product is a Banach algebra called  {\it Lebesgue-Fourier algebra} of
$G$; moreover, it is a Segal algebra on locally compact group $G$. ${\cal L}A(G)$ is a proper Segal algebra on $G$ if and only if $G$ is not discrete.

Also, ${\cal L}A(G)$ with pointwise multiplication is a Banach algebra and even
an abstract Segal algebra of $A(G)$. 
Similarly, ${\cal L}A(G)$ is a proper subset of $A(G)$ if and only if $G$ is not compact.   }
\item{The convolution algebra $L^1(G)\cap L^p(G)$ for $1\leq p <\infty$ equipped with the norm $\norm{f}_{1}+\norm{f}_{p}$ is a Segal algebra.}
\item{Similarly, $L^1(G) \cap C_0(G)$  with respect to the norm $\norm{f}_{1}+\norm{f}_{\infty}$ is a Segal algebra where $C_0(G)$ is the $C^*$-algebra of continuous functions vanishing at infinity.}
\item{ Let $G$ be a  compact group, $\cal F$ denote the Fourier transform, and ${\cal L}^p(\widehat{G})$ be the space which will be defined in (\ref{eq:cal L^p(^G)}).  We can see that ${\cal F}^{-1}\big({\cal L}^p(\widehat{G})\big)$, we denote by $\Cp$, equipped with convolution  is  a subalgebra of $L^1(G)$. For $\norm{f}_\Cp:=\norm{{\cal F}f}_\Lp$,
one can show that  for each $1\leq p \leq 2$, $\big(\Cp,\norm{\cdot}_{\Cp}\big)$ is a Segal algebra of $G$.}
\end{itemize}

In \cite{ch}, a nice criterion is developed to prove the non-approximate amenability of Banach algebras. At several points below we will rely crucially on that criterion.
For this reason, we present a version of that below.
Recall that for a Banach algebra $\cal A$, a sequence  $(a_n)_{n\in \Bbb{N}}\subseteq \cal A$ is called {\it multiplier-bounded} if $\sup_{n\in\Bbb{N}}\norm{a_n b}\leq M \norm{b}$ for all $b\in \cal A$.

\begin{theorem}\label{t:SUM}
Let $\cal A$ be a Banach algebra. Suppose that there exists an unbounded but multiplier-bounded sequence $(a_n)_{n\geq 1}\subseteq {\cal A}$ such that 
\[
a_na_{n+1}=a_n=a_{n+1}a_n
\]
 for all $n$. Then $\cal A$ is not approximately amenable.
\end{theorem}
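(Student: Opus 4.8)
The plan is to argue by contradiction: I assume $\mathcal{A}$ is approximately amenable and derive that $(a_n)$ must be bounded, contradicting the hypothesis. Two structural facts are all I intend to use. First, writing $L_{a_n}\colon b\mapsto a_nb$ for the left-multiplication operators on $\mathcal{A}$, multiplier-boundedness is exactly the uniform bound $\sup_n\|L_{a_n}\|\le M$; it is worth stressing at the outset that this controls $a_n$ only through its action on $\mathcal{A}$ and says nothing about the norm $\|a_n\|$, which is where the whole difficulty will sit. Second, the relations $a_na_{n+1}=a_n=a_{n+1}a_n$ immediately give the telescoping identities
\[
L_{a_n}L_{a_{n+1}}=L_{a_n}=L_{a_{n+1}}L_{a_n},
\]
so that on the level of operators the sequence is a chain of one-sided units, each $L_{a_{n+1}}$ fixing $L_{a_n}$ on both sides.

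The input from approximate amenability that I would bring in is the standard reformulation in terms of approximately inner derivations. Passing to the unitization $\mathcal{A}^{\#}$ (still approximately amenable), approximate amenability yields an \emph{approximate diagonal}: a net $(m_\nu)$ in $\mathcal{A}^{\#}\ptp\mathcal{A}^{\#}$, not assumed bounded, with
\[
\pi(m_\nu)\to e,\qquad a\cdot m_\nu-m_\nu\cdot a\to 0\quad(a\in\mathcal{A}^{\#}),
\]
where $\pi$ denotes the product map and $e$ the adjoined identity. The goal is then to feed the telescoping sequence into this net and read off a bound on $\|a_n\|$ that is uniform in $n$.

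The technical heart is a telescoping estimate. The idea is to apply to $(m_\nu)$ the operators assembled from the $L_{a_n}$, which are uniformly bounded by $M$, and to contract back with $\pi$; the relations above let me rewrite $a_n$ as $a_{n+1}a_n$ or $a_na_{n+1}$ and so compare the level-$n$ and level-$(n+1)$ contributions, while the almost-central and almost-unital properties of $m_\nu$ are used to pass from the operator identities back to statements about the elements $a_n$ themselves. Taking the limit in $\nu$ first, the only surviving constant is the multiplier bound $M$, and the comparison across $n$ collapses the putative growth of $\|a_n\|$ to a fixed multiple of $M$, giving $\sup_n\|a_n\|<\infty$ and hence the contradiction.

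The main obstacle is conceptual rather than computational: approximate amenability, unlike amenability, furnishes only an \emph{unbounded} approximating net, so there is no bounded approximate diagonal to quote and the required uniform control of $\|a_n\|$ cannot be inherited from the net. It must instead be manufactured entirely from the interaction of the two hypotheses, and two points need care. One must arrange that every operator applied to $(m_\nu)$ uses $a_n$ only through its $\mathcal{A}$-action, where the bound $M$ is available, and never the bare element $a_n=a_n\cdot e$ that unitization reintroduces with its uncontrolled norm; and one must organize the telescoping so that the error terms $a\cdot m_\nu-m_\nu\cdot a$ and $\pi(m_\nu)-e$ are absorbed uniformly in $n$ as $\nu\to\infty$. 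Getting the order of limits and the placement of the $a_n$ correct is exactly where the argument has to be delicate.
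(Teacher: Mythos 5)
The paper does not actually prove this theorem: it is quoted as a criterion from Choi and Ghahramani \cite{ch} (``we present a version of that below''), so your attempt has to be measured against the argument in that reference. Your framework is the correct, and essentially forced, starting point --- assume $\mathcal{A}$ is approximately amenable, invoke the characterization from \cite{aa} giving a (generally unbounded) net $(m_\nu)$ in $\mathcal{A}^{\#}\widehat{\otimes}\mathcal{A}^{\#}$ with $\pi(m_\nu)\to e$ and $a\cdot m_\nu-m_\nu\cdot a\to 0$, and try to deduce $\sup_n\|a_n\|<\infty$. The gap is that the step you yourself call ``the technical heart'' is the entire content of the theorem, and you never carry it out: no inequality is derived, no cancellation is exhibited, and the paragraph in question describes what a proof would have to accomplish rather than accomplishing it. Correctly flagging the obstacles (the net is unbounded; $L_{a_n}$ on $\mathcal{A}^{\#}$ has norm at least $\|a_n\|$; the order of limits matters) is not the same as overcoming them.

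Here is why the mechanism you describe does not close as stated. Normalize $\pi(m_\nu)=e$ (replace $m_\nu$ by $m_\nu+e\otimes(e-\pi(m_\nu))$) and write $m_\nu=e\otimes e+e\otimes x_\nu+y_\nu\otimes e+T_\nu$ with $x_\nu,y_\nu\in\mathcal{A}$ and $T_\nu\in\mathcal{A}\widehat{\otimes}\mathcal{A}$. The four summands of $\mathcal{A}^{\#}\widehat{\otimes}\mathcal{A}^{\#}$ are complemented, so almost-commutation is equivalent to three conditions: $\|a+x_\nu a\|\to0$, $\|a+ay_\nu\|\to0$, and
\[
\bigl\|a\otimes x_\nu-y_\nu\otimes a+a\,T_\nu-T_\nu\,a\bigr\|\to 0\qquad(a\in\mathcal{A}).
\]
Because the net converges only pointwise in $a$, the index $\nu$ must be chosen \emph{after} $n$; hence any estimate for $\|a_n\|$ in which a factor $\|x_\nu\|$, $\|y_\nu\|$ or $\|T_\nu\|$ survives is worthless, and those terms must cancel exactly. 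But the bounded maps your hypotheses supply which do produce exact cancellation return only tautologies. For example, $\Phi(p\otimes q):=qa_np$ has norm at most $M$ on $\mathcal{A}\widehat{\otimes}\mathcal{A}$ and annihilates $a_{n+1}T_\nu-T_\nu a_{n+1}$ precisely because $a_na_{n+1}=a_n=a_{n+1}a_n$; yet applying $\Phi$ to the displayed condition at $a=a_{n+1}$ yields only $\|x_\nu a_n-a_ny_\nu\|\to0$, which already follows from the first two conditions, so nothing is learned. Moreover, your announced conclusion --- that the growth of $\|a_n\|$ collapses to ``a fixed multiple of $M$'' --- is impossible on scaling grounds: replacing the norm by $t\|\cdot\|$ with $t\ge1$ preserves approximate amenability, the algebraic relations and the multiplier constant $M$, while multiplying every $\|a_n\|$ by $t$ (take $\mathcal{A}=c_0$ with $a_n$ the indicator of $\{1,\dots,n\}$ to see the hypotheses are non-vacuously satisfiable); consequently no bound of the form $\sup_n\|a_n\|\le F(M)$ can be proved, and a correct argument must retain some scale-carrying anchor beyond $M$. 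Manufacturing a contradiction in spite of all this is exactly the delicate work done in \cite{ch}, and none of it appears in your sketch; you should work through that proof rather than treat this step as routine.
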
 

The following theorem is the main theorem of this section.

\begin{theorem}\label{t:SA(G)-non-a-a-abelian}
Let $G$ be a locally compact amenable group and $SA(G)$ a proper abstract Segal algebra of $A(G)$. Then $SA(G)$ is not approximately amenable.
\end{theorem}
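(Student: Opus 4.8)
The plan is to apply the Choi--Ghahramani criterion (Theorem~\ref{t:SUM}) to the proper abstract Segal algebra $SA(G)$. To do this I need to construct an unbounded but multiplier-bounded sequence $(a_n)$ in $SA(G)$ satisfying the chain relations $a_na_{n+1}=a_n=a_{n+1}a_n$. The natural source of such elements is Lemma~\ref{l:A(G)-general-construction}, which produces functions $u_V\in A(G)\cap C_c(G)$ that are plateau functions equal to $1$ on a prescribed compact set $K$, supported in a prescribed open set $U$, with norm controlled by $(\lambda(KV)/\lambda(V))^{1/2}$. By Proposition~\ref{p:regularity-of-asa}, since $A(G)$ is commutative, semisimple and regular (as noted after the lemma), every element of $A(G)$ with compact support lies in $SA(G)$; in particular all the $u_V$ belong to $SA(G)$.

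The construction I would carry out is as follows. First I would choose a nested, exhausting sequence of compact sets and open sets: compact $K_n$ and open, relatively compact $U_n$ with $K_n\subseteq U_n\subseteq \overline{U_n}\subseteq K_{n+1}$ and $\bigcup_n K_n=G$ (possible since $G$ is $\sigma$-compact on the relevant part, or by restricting to such a subgroup; one arranges the sets so that the plateau of $u_{n+1}$ covers the support of $u_n$). Then I would invoke amenability of $G$ through its equivalence with the Leptin condition: for any compact $K$ and any $\veps>0$ there is a measurable $V$ with $0<\lambda(V)<\infty$ and $\lambda(KV)/\lambda(V)<1+\veps$. Applying this for each $n$ with $K=K_n$ and a summable choice of $\veps_n$, I obtain sets $V_n$ and corresponding functions $a_n:=u_{V_n}$ satisfying $\|a_n\|_{A(G)}\leq (1+\veps_n)^{1/2}$, so the sequence is \emph{bounded in the $A(G)$-norm}. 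Because $a_{n+1}\equiv 1$ on $\supp a_n$ and multiplication is pointwise, the telescoping relations $a_na_{n+1}=a_n=a_{n+1}a_n$ hold automatically. Multiplier-boundedness in $SA(G)$ then follows from axiom~(3) of an abstract Segal algebra: $\|a_n b\|_{SA(G)}\leq C\|a_n\|_{A(G)}\|b\|_{SA(G)}\leq C\sup_n(1+\veps_n)^{1/2}\|b\|_{SA(G)}$.

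The remaining and genuinely essential point is to show that $(a_n)$ is \emph{unbounded} in the $SA(G)$-norm. This is where properness is used: if $\sup_n\|a_n\|_{SA(G)}<\infty$, then since the $a_n$ form an approximate identity for $A(G)$ (they converge pointwise to $1$ on the exhausting sets with uniformly bounded $A(G)$-norm, hence approximate any compactly supported element, which are dense in $A(G)$), a bounded-sequence/closed-graph argument would force $SA(G)=A(G)$ with equivalent norms, contradicting properness. Concretely, I would argue that boundedness of $\|a_n\|_{SA(G)}$ lets one extract a weak-type limit or use the standard fact that a bounded approximate identity in the ambient algebra that lies inside the Segal algebra with bounded Segal-norm forces the two algebras to coincide.

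The step I expect to be the main obstacle is precisely this last unboundedness argument: converting ``properness of the inclusion $SA(G)\subsetneq A(G)$'' into ``the specific sequence $(a_n)$ is $SA(G)$-unbounded.'' The subtlety is that properness is a statement about the inclusion map, whereas I need it to detect blow-up along my particular plateau functions. I would handle this by showing that $SA(G)$-boundedness of any $A(G)$-approximate identity consisting of compactly supported functions, combined with axioms (2) and (3) and density, yields a uniform bound $\|b\|_{SA(G)}\leq C'\|b\|_{A(G)}$ on the dense ideal of compactly supported elements, which extends to an isomorphism of $SA(G)$ onto $A(G)$ and contradicts properness. Verifying that the $u_{V_n}$ genuinely act as an approximate identity for $A(G)$ (not merely that they are norm-bounded) is the technical heart, and relies on the regularity of $A(G)$ together with the $A(G)$-norm bound from Lemma~\ref{l:A(G)-general-construction}(4).
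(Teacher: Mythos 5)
Your overall strategy---plateau functions from Lemma~\ref{l:A(G)-general-construction}, the Leptin condition to keep their $A(G)$-norms bounded, multiplier-boundedness via axiom (3) of abstract Segal algebras, and then Theorem~\ref{t:SUM}---is exactly the paper's. The genuine gap is in your route to the unboundedness of $(a_n)$ in $\norm{\cdot}_{SA(G)}$, and it sits precisely where you flagged it. Your argument needs the plateaus to eventually absorb the support of \emph{every} compactly supported element of $A(G)$ (note that what you really use is the exact identity $a_nb=b$ for large $n$; mere $A(G)$-norm convergence $a_nb\to b$ transfers nothing to the $SA(G)$-norm), so it needs $G$ to be $\sigma$-compact, while the theorem is claimed for arbitrary locally compact amenable $G$. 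Your proposed repair (``restrict to a $\sigma$-compact open subgroup $H$'') does not work as stated: if $(a_n)$ were $SA(G)$-bounded, your extension argument would only yield $\norm{b}_{SA(G)}\leq C'\norm{b}_{A(G)}$ for $b$ supported in $H$, and functions supported in a proper open subgroup are \emph{not} dense in $A(G)$ (they all vanish on the nonempty open set $G\setminus H$), so no contradiction with properness follows; since abstract Segal algebras of $A(G)$ carry no translation-invariance axiom, the failure of norm equivalence may be invisible on any subgroup chosen in advance. The paper fixes the order of quantifiers: properness is first converted into compactly supported witnesses $f_n\in C_c(G)\cap A(G)\subseteq SA(G)$ with $n\norm{f_n}_{A(G)}\leq \norm{f_n}_{SA(G)}$ (if no such $f_n$ existed, your own density/completeness argument would already force $SA(G)=A(G)$), and only then are the plateau functions built, inductively, with $u_n\equiv 1$ on $\supp f_n$ and $u_{n+1}\equiv 1$ on $\supp u_n$. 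Unboundedness is then immediate: $\sup_n\norm{u_n}_{SA(G)}=C'<\infty$ would give $\norm{f_n}_{SA(G)}=\norm{u_nf_n}_{SA(G)}\leq C\norm{f_n}_{A(G)}\norm{u_n}_{SA(G)}\leq CC'\norm{f_n}_{A(G)}$, contradicting the choice of $f_n$. This uses only the countably many compacta $\supp f_n$ and never an exhaustion of $G$.

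A secondary flaw: you fix the nested pairs $K_n\subseteq U_n\subseteq\overline{U_n}\subseteq K_{n+1}$ \emph{before} invoking the Leptin condition, but Lemma~\ref{l:A(G)-general-construction} requires $K_nV_nV_n^{-1}\subseteq U_n$, and a Leptin set $V_n$ cannot in general be taken small: already for $G=\Bbb{R}^d$, the condition $\lambda(K_nV_n)/\lambda(V_n)<1+\veps$ forces $\lambda(V_n)$, hence the diameter of $V_nV_n^{-1}$, to be large, so $K_nV_nV_n^{-1}$ will not fit inside a prescribed relatively compact $U_n$. The cure is the same inductive reordering the paper uses: build $u_n$ first, with whatever compact support it comes with, and only afterwards choose the next compact set (there, $\supp u_n\cup\supp f_{n+1}$) on which $u_{n+1}$ is required to equal $1$. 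With these two reorderings---witnesses $f_n$ before plateaus, and an inductive rather than pre-assigned nesting---your proof becomes correct, and for $\sigma$-compact $G$ your version is essentially a workable variant of the paper's argument.
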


\begin{proof} 
Since  $SA(G)$ is a proper abstract Segal algebra of $A(G)$, the norms $\norm{\cdot}_{SA(G)}$ and
$\norm{\cdot}_{A(G)}$ can not be equivalent. On the other hand,  $M \norm{f}_{A(G)}\leq  \norm{f}_{SA(G)}$ for $f\in SA(G)$ and  some $M>0$. Therefore, we can find a sequence $(f_n)_{n\in \Bbb{N}}$ in $C_c(G)\cap A(G)$, and hence by Proposition~\ref{p:regularity-of-asa}  in $SA(G)$, such that 
\begin{equation}\label{eq:condition-of-f_n}
n \norm{f_n}_{A(G)} \leq \norm{f_n}_{SA(G)}\ \ \text{for all $n\in \Bbb{N}$}.
\end{equation}

If $G$ is a  locally compact group, then $G$ is amenable if and only if it satisfies the {\it Leptin condition} i.e. for every $\epsilon>0$ and compact set $K\subseteq  G$, there
exists a relatively compact neighborhood $V$ of $e$ such that  $\lambda(KV)/\lambda(V) <1+\epsilon$, \cite[Section~2.7]{pi}. 

Fix $D>1$. Using the Leptin condition and Lemma~\ref{l:A(G)-general-construction}, we can generate a sequence $(u_n)_{n\in \Bbb{N}}$ inductively in $A(G)\cap C_c(G)\subseteq SA(G)$  such that $u_n|_{\supp f_n} \equiv 1$,
 ${ \supp}u_{n} \subseteq \{x\in G: u_{n+1}(x)=1\}$, and
 $\|u_n\|_{A(G)} \leq  D$.
Hence  $u_{n}f_n=f_n$ and $u_n u_{n+1}=u_n$ for every $n\in \Bbb{N}$.

 So here we only need to prove unboundedness of $(u_n)_{n\geq 1}$ in $\norm{\cdot}_{SA(G)}$.
Suppose otherwise  then $\sup_{n\in\Bbb{N}}\norm{u_n}_{SA(G)}=C'$
for some $0<C' <\infty$. Then for each $n\in \Bbb{N}$, one can write
\[
\norm{f_n}_{SA(G)}=\norm{u_{n} f_n}_{SA(G)}\leq C \norm{f_n}_{A(G)} \norm{u_{n}}_{SA(G)} \leq C'C \norm{f_n}_{A(G)}
\]
for some fixed $C>0$. But this violates the condition (\ref{eq:condition-of-f_n}); therefore, $(u_n)_{n\in\Bbb{N}}$  is unbounded in $\norm{\cdot}_{SA(G)}$. Consequently, Theorem~\ref{t:SUM} shows that $SA(G)$ is not approximately amenable.
\end{proof}

We should recall that Leptin condition played a crucial role in the proof. Indeed we have used Leptin condition to impose $\norm{\cdot}_{A(G)}$-boundedness to the sequence $(u_n)$. 
As we mentioned before, the approximate amenability of all proper Segal algebras on $\Bbb{R}^d$ has been studied by Choi and Ghahramani in~\cite{ch}. We are therefore motivated to conclude a generalization of their result in the following corollary.

\begin{cor}\label{c:non-a-a-of-proper-segal-algebras-of-abelian-groups}
Let $G$ be a locally compact abelian group. Then every proper Segal subalgebra of $L^1(G)$ is not approximately amenable. 
\end{cor}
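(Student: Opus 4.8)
The plan is to transport the whole problem to the dual group via the Fourier transform, so that the assertion reduces to a direct application of Theorem~\ref{t:SA(G)-non-a-a-abelian}. The structural fact driving this is that for a locally compact abelian group $G$, the Fourier transform $\mathcal{F}$ implements an isometric isomorphism of Banach algebras between $(L^1(G),*)$ and the Fourier algebra $(A(\widehat{G}),\cdot)$ of the Pontryagin dual $\widehat{G}$, carrying convolution to pointwise multiplication and satisfying $\|\mathcal{F}f\|_{A(\widehat{G})}=\|f\|_1$. Since $\widehat{G}$ is again abelian, it is in particular amenable, which is exactly the hypothesis needed to invoke Theorem~\ref{t:SA(G)-non-a-a-abelian} on $A(\widehat{G})$.

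First I would fix a proper Segal subalgebra $S^1(G)$ of $L^1(G)$ and recall, as noted after the definition of Segal algebras above, that $S^1(G)$ is then a proper abstract Segal algebra of $L^1(G)$. Next I would push this structure through $\mathcal{F}$: setting $B:=\mathcal{F}(S^1(G))$ and equipping it with the transported norm $\|\mathcal{F}f\|_{B}:=\|f\|_{S^1}$, the restriction $\mathcal{F}|_{S^1(G)}$ becomes an isometric isomorphism of $S^1(G)$ onto $B$. I would then verify that $B$ is a proper abstract Segal algebra of $A(\widehat{G})$ by checking the three defining conditions. Density of $B$ in $A(\widehat{G})$ and the ideal property follow because $\mathcal{F}$ is an isometric algebra isomorphism applied to the dense ideal $S^1(G)$ (note $L^1(G)$ is commutative, so left and two-sided ideals coincide); the norm domination $\|b\|_{A(\widehat{G})}\le\|b\|_{B}$ follows from $\|\mathcal{F}f\|_{A(\widehat{G})}=\|f\|_1\le\|f\|_{S^1}$; and the multiplicative estimate $\|ab\|_{B}\le C\|a\|_{A(\widehat{G})}\|b\|_{B}$ is the image under $\mathcal{F}$ of the corresponding estimate for $S^1(G)$ inside $L^1(G)$. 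Properness of $B$ in $A(\widehat{G})$ is immediate from properness of $S^1(G)$ in $L^1(G)$, since $\mathcal{F}$ is a bijection of $L^1(G)$ onto $A(\widehat{G})$.

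With $B$ identified as a proper abstract Segal algebra of $A(\widehat{G})$ and $\widehat{G}$ amenable, Theorem~\ref{t:SA(G)-non-a-a-abelian} applies directly to show that $B$ is not approximately amenable. Finally I would invoke the fact that approximate amenability is preserved under isometric (indeed topological) isomorphism of Banach algebras: since $\mathcal{F}|_{S^1(G)}$ is such an isomorphism of $S^1(G)$ onto $B$, the algebra $S^1(G)$ is not approximately amenable either. I expect the only point requiring genuine care to be the bookkeeping of the previous paragraph, namely confirming that each abstract Segal algebra axiom is genuinely transported by the Fourier transform with the conventions (norms, ideal property, density) matched precisely; this is routine given that $\mathcal{F}$ is an isometric Banach-algebra isomorphism, and everything else is a formal transfer through a single well-known isomorphism.
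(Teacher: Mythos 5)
Your proposal is correct and is essentially the paper's own argument: the paper likewise applies the Fourier transform to convert $S^1(G)$ into a proper abstract Segal algebra of $A(\widehat{G})$ and then invokes Theorem~\ref{t:SA(G)-non-a-a-abelian}, using that $\widehat{G}$ is abelian and hence amenable. You simply make explicit the routine verifications (the three abstract Segal algebra axioms and the transfer of approximate amenability through the isometric isomorphism) that the paper leaves implicit.
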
 

\begin{proof} Let $S^1(G)$ be a proper Segal algebra on $G$. Applying the Fourier transform on $L^1(G)$, we may transform $S^1(G)$ to a proper abstract Segal algebra $SA(\widehat{G})$ of $A(\widehat{G})$, the Fourier algebra on the dual of group $G$. By Theorem~\ref{t:SA(G)-non-a-a-abelian}, $SA(\widehat{G})$ is  not approximately amenable.
\end{proof}
 
\begin{rem}\label{eg:lebesgue-Fourier-algebra}
In particular, by Theorem~\ref{t:SA(G)-non-a-a-abelian}, for amenable locally compact group $G$  the Lebesgue Fourier algebra with pointwise multiplication is approximately amenable if and only if $G$ is a compact group, in which case it equals the Fourier algebra of $G$.
Moreover, by Corollary~\ref{c:non-a-a-of-proper-segal-algebras-of-abelian-groups},
for  a locally compact abelian group $G$, the Lebesgue Fourier algebra with convolution product is approximately amenable if and only if $G$ is discrete, in which case it equals $\ell^1(G)$.
\end{rem}

 \vskip2.0em
\end{section}


\begin{section}{Hypergroups and their Fourier algebra}\label{s:hypergroup-approach}

Although the dual of a compact group is not a group, in general, it is a (commutative discrete) hypergroup. We give the background needed for this result in Subsection~\ref{ss:dual-of-compact}. 
Muruganandam, \cite{mu1}, gave a definition of the {\it Fourier space}, $A(H)$, of a hypergroup $H$ and showed
that $A(H)$ is a Banach algebra with pointwise product for certain commutative hypergroups.
 In Subsection~\ref{ss:Fourier-on-^G}, we study the Fourier space on the dual of a compact group $G$, denoted by $A(\widehat{G})$. We show that indeed for each compact group $G$, $A(\widehat{G})$ is a Banach algebra. 
 

\begin{subsection}{Preliminaries and notations}\label{ss:dual-of-compact}
For studying hypergroups, we mainly rely on \cite{bl}. As a short summary for hypergroups we give the following definitions and facts. Let  $(H,*,\tilde{ }\;)$ be a (locally compact) {\it hypergroup} possessing a Haar measure $h$. The notation $A*B$ stands for  
\[
\bigcup\{\supp(\delta_x*\delta_y):\; \text{for all }\;x\in A, y\in B\}
\]
 for $A,B$ subsets of the hypergroup $H$. With abuse of notation, we use $x*A$ to imply $\{x\}*A$. 
Let $C_c(H)$ be the space of all complex valued compact supported continuous functions over $H$. We define
\[
L_xf(y)=\int_H f(t) d\delta_x*\delta_y(t) \ \ f\in C_c(H),\ x,y\in H.
\]

\noindent Defining 
\[
f*_hg(x):=\int_{H} f(t) L_{\tilde{t}}g(x) dh(t), \ \ \tilde{f}(x):=f(\tilde{x}),\ \text{ and}\ \ f^*:=\overline{(\tilde{f})},
\]
 we can see that all of the functions $f:H\rightarrow \Bbb{C}$ such that 
\[
\norm{f}_{L^1(H,h)}:=\int_{H} |f(t)| dh(t)<\infty
\]
 form a Banach $*$-algebra, denoted by $(L^1(H,h),*_h,\norm{\cdot}_h)$;  it is called the {\it hypergroup algebra} of $H$.

\vskip1.5em

\noindent If $H$ is discrete and $h(e)=1$, we have
\[
h(x)=(\delta_{\tilde{x}}*\delta_x(e))^{-1}.
\]

\begin{lemma}\label{l:Haar-convolution-of-dirac-functions}
Let $H$ be a discrete hypergroup. For each pair $x,y\in H$, 
\[
\delta_x *_h \delta_y(z) = \delta_x * \delta_y(z)\frac{h(x)h(y)}{h(z)}
\]
for each $z\in H$.
\end{lemma}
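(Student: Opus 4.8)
The plan is to use discreteness to turn $*_h$ into a weighted sum, evaluate $\delta_x *_h \delta_y$ explicitly, and thereby reduce the lemma to a single structural identity between the two products. Since $H$ is discrete, the integral defining $*_h$ becomes $f *_h g(w) = \sum_{t\in H} f(t)\, L_{\tilde t} g(w)\, h(t)$; putting $f=\delta_x$ leaves only the term $t=x$, so
\[
\delta_x *_h \delta_y(z) = h(x)\, L_{\tilde x}\delta_y(z).
\]
Unwinding the definition of the translation $L_{\tilde x}$ gives $L_{\tilde x}\delta_y(z) = \int_H \delta_y(t)\, d(\delta_{\tilde x}*\delta_z)(t) = (\delta_{\tilde x}*\delta_z)(y)$, so the claim reduces to the purely structural identity
\begin{equation*}
(\delta_{\tilde x}*\delta_z)(y)\, h(z) = (\delta_x*\delta_y)(z)\, h(y). \tag{$\star$}
\end{equation*}
Granting $(\star)$, the displayed formula above immediately becomes $\delta_x *_h \delta_y(z) = (\delta_x*\delta_y)(z)\, h(x)h(y)/h(z)$, which is the assertion.

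To establish $(\star)$ I would work with the mass of a convolution at the identity. The hypergroup axioms say that $e\in\supp(\delta_a*\delta_b)$ precisely when $b=\tilde a$, and the normalisation $h(e)=1$ together with the stated formula $h(a)=(\delta_{\tilde a}*\delta_a(e))^{-1}$ then shows that $(\delta_a*\delta_b)(e)$ vanishes unless $b=\tilde a$, in which case it equals $h(\tilde a)^{-1}$. I would then evaluate the triple convolution $(\delta_{\tilde x}*\delta_z*\delta_{\tilde y})(e)$ in the two ways allowed by associativity: bracketing on the left collapses it (using the involution invariance $h(\tilde a)=h(a)$ of the Haar measure, from \cite{bl}) to $(\delta_{\tilde x}*\delta_z)(y)\, h(y)^{-1}$, while bracketing on the right gives $(\delta_z*\delta_{\tilde y})(x)\, h(x)^{-1}$. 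A second triple product, $(\delta_x*\delta_y*\delta_{\tilde z})(e)$, treated in the same manner and combined with the fact that the involution is an anti-automorphism of $*$ (so that $(\delta_y*\delta_{\tilde z})(\tilde x)=(\delta_z*\delta_{\tilde y})(x)$), expresses $(\delta_z*\delta_{\tilde y})(x)$ as $(\delta_x*\delta_y)(z)\, h(x)/h(z)$. Substituting this back into the first relation cancels the weight factors and yields $(\star)$.

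The main obstacle is bookkeeping rather than anything conceptual: one has to keep track of the weights $h(a)$ and their involuted forms $h(\tilde a)$ through each collapse and apply the involution invariance of $h$ at exactly the right spots, since a single misplaced factor changes the final constant. If one prefers to avoid the derivation entirely, the identity $(\star)$ is the standard relation between the structure constants and the Haar weights of a discrete hypergroup and may simply be quoted from \cite{bl}; the computation in the first paragraph then completes the proof.
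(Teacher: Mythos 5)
The paper states Lemma~\ref{l:Haar-convolution-of-dirac-functions} without any proof, so your argument has to stand on its own; most of it does. Your opening reduction is correct and is the right skeleton: discreteness gives $\delta_x *_h \delta_y(z) = h(x)\,L_{\tilde x}\delta_y(z) = h(x)\,(\delta_{\tilde x}*\delta_z)(y)$, so the lemma is exactly your identity $(\star)$, namely $(\delta_{\tilde x}*\delta_z)(y)\,h(z) = (\delta_x*\delta_y)(z)\,h(y)$. Your closing fallback is also sound, and is in fact the cleanest complete proof: $(\star)$ is precisely the adjoint relation $\int_H L_x f\cdot g\,dh = \int_H f\cdot L_{\tilde x}g\,dh$ of \cite[Theorem~1.3.21]{bl} --- the same citation the paper uses in the proof of Lemma~\ref{l:A(H)-properties} --- applied to $f=\delta_z$, $g=\delta_y$; that relation is valid for the left Haar measure of an arbitrary hypergroup, with no further hypotheses.

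The genuine gap is in your primary derivation of $(\star)$: you invoke ``involution invariance $h(\tilde a)=h(a)$ of the Haar measure, from \cite{bl}.'' That statement is equivalent to unimodularity of $H$ and is \emph{false} for general discrete hypergroups, so it cannot be quoted. Concretely, let $G$ be the $ax+b$ group over $\mathbb{Q}_p$ and $K=\mathbb{Z}_p\rtimes\mathbb{Z}_p^{\times}$ its compact open subgroup; then $H=G//K$ is a discrete hypergroup, and for $x$ the dilation $t\mapsto pt$ one has $xKx^{-1}\subsetneq K\subsetneq x^{-1}Kx$, which yields $\delta_{KxK}*\delta_{Kx^{-1}K}=\delta_{K}$ while $(\delta_{Kx^{-1}K}*\delta_{KxK})(\{K\})=1/p$; hence $h(KxK)=p$ but $h(\widetilde{KxK})=1$. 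Without invariance, your two triple products only give $(\delta_{\tilde x}*\delta_z)(y) = (\delta_x*\delta_y)(z)\,h(\tilde x)h(\tilde y)/\bigl(h(\tilde z)h(x)\bigr)$, which is not $(\star)$. The repair is small: collapse the \emph{single} triple product $(\delta_{\tilde z}*\delta_x*\delta_y)(e)$ instead. One bracketing gives $(\delta_{\tilde z}*\delta_x)(\tilde y)\,(\delta_{\tilde y}*\delta_y)(e)=(\delta_{\tilde z}*\delta_x)(\tilde y)/h(y)$, the other gives $(\delta_x*\delta_y)(z)\,(\delta_{\tilde z}*\delta_z)(e)=(\delta_x*\delta_y)(z)/h(z)$, and the anti-automorphism identity $(\delta_{\tilde z}*\delta_x)(\tilde y)=(\delta_{\tilde x}*\delta_z)(y)$ turns their equality into $(\star)$. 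In this variant the defining formula $h(a)^{-1}=(\delta_{\tilde a}*\delta_a)(e)$ is only ever used with the involution on the left-hand factor, so no invariance of $h$ is needed. (The flaw is invisible in the paper's application, since $\widehat{G}$ is commutative and hence unimodular; but the lemma is asserted for arbitrary discrete $H$, where your step as written fails.)
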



\vskip1.5em

In this section, let $G$ be a compact group and $\widehat{G}$ the set of all irreducible unitary representations of $G$. In this paper we follow the notation of \cite{du} for the dual of compact groups.  Where ${\cal H}_\pi$ is the finite dimensional Hilbert space related to the representation $\pi\in\widehat{G}$, we define $\chi_\pi:=Tr\pi$, the group character generated by $\pi$ and $d_\pi$ denotes the dimension of  ${\cal H}_\pi$. Let $\phi=\{\phi_\pi:\; \pi\in\widehat{G}\}$ if $\phi_\pi\in{\cal B}({\cal H}_\pi)$ for each $\pi$ and define 
\[
\norm{\phi}_{{\cal L}^\infty(\widehat{G})}:=\sup_\pi\norm{\phi_\pi}_\infty \]
 for $\norm{\cdot}_\infty$, the operator norm. The set of all those $\phi$'s  with $\norm{\phi}_{{\cal L}^\infty(\widehat{G})}<\infty$ forms a $C^*$-algebra; we denote it by ${{\cal L}^\infty(\widehat{G})}$. It is well known that ${\cal L}^\infty(\widehat{G})$ is isomorphic to the von Neumann algebra of $G$ i.e. the dual of $A(G)$, see \cite[8.4.17]{du}. We define
\begin{equation}\label{eq:cal L^p(^G)}
{\cal L}^p(\widehat{G})=\{\phi\in{\cal L}^\infty(\widehat{G}): \norm{\phi}_{{\cal L}^p(\widehat{G})}^p:=\sum_{\pi \in\widehat{G}} d_\pi \norm{\phi_\pi}_p^p<\infty\},
\end{equation}
for $\norm{\cdot}_p$, the $p$-Schatten norm. For each $p$, ${\cal L}^p(\widehat{G})$ is an ideal of ${\cal L}^\infty(\widehat{G})$, see \cite[8.3]{du}. 
Moreover, we define
 \[
 {\cal C}_0(\widehat{G})=\{\phi\in{\cal L}^\infty(\widehat{G}):\;\lim_{\pi\rightarrow\infty}   \norm{\phi_\pi}_\infty=0\}.\]
For each $f\in L^1(G)$, ${\cal F}(f)=(\hat{f}(\pi))_{\pi\in\widehat{G}}$ belongs to ${\cal C}_0(\widehat{G})$, where $\cal F$ denotes Fourier transform and
\[
\hat{f}(\pi)=\int_G f(x)  \pi(x^{-1}) dx.
\]
 Indeed, ${\cal F}(L^1(G))$ is a dense subset of ${\cal C}_0(\widehat{G})$ and ${\cal F}$ is an isomorphism from Banach algebra $L^1(G)$ onto its image.

  For each two irreducible representations $\pi_{1},\pi_{2}\in\widehat{G}$, we know that $\pi_1 \otimes \pi_2$ can be written as a product of $\pi'_1,\cdots,\pi'_n$ elements of $\widehat{G}$ with respective multiplicities  $m_1,\cdots,m_n$, i.e.
\[
\pi_1\otimes \pi_2 \cong \bigoplus_{i=1}^n m_i \pi'_i.
\]
We define a convolution on  $\ell^1(\widehat{G})$ by
\begin{equation}\label{eq:hypergroup-convolution-on-^G}
\delta_{\pi_1}* \delta_{\pi_2}:=\sum_{i=1}^n \frac{m_i d_{\pi'_i}}{d_{\pi_1}d_{\pi_2}}\delta_{\pi'_i}
\end{equation}
and define an involution  by $ \tilde{\pi}=\overline{\pi}$
for all $\pi,\pi_1,\pi_2\in \widehat{G}$. It is straightforward to verify that $(\widehat{G}, * ,\tilde{ }\;)$ forms a discrete commutative hypergroup such that $\pi_0$, the trivial representation of $G$, is the identity element of $\widehat{G}$
and $h(\pi)=d_\pi^2$ is the Haar measure of $\widehat{G}$. 

\vskip2.0em

\begin{eg}\label{eg:SU(2)}
Let $\widehat{SU(2)}$ be the hypergroup of all irreducible representations of the compact group $SU(2)$. We know that 
\[
\widehat{SU(2)}=(\pi_\ell)_{\ell \in 0,\frac{1}{2},1,\frac{3}{2}, \cdots}
\]
where the dimension of $\pi_\ell$ is $2\ell+1$, see \cite[29.13]{he2}. Moreover, 
\[
\pi_\ell \oplus \pi_{\ell'}= \bigoplus_{r=|\ell-\ell'|}^{\ell+\ell'} \pi_r = \pi_{|\ell-\ell'|} \oplus \pi_{|\ell-\ell'|+1 } \oplus \cdots \oplus \pi_{\ell+\ell'} \ \ \ \text{(by \cite[Theorem 29.26]{he2})}.\]
So using Definition~\ref{eq:hypergroup-convolution-on-^G}, we have that
\[
\delta_{\pi_\ell}*\delta_{\pi_{\ell'}} = \sum_{r=|\ell-\ell'|}^{\ell+\ell'} \frac{(2r+1)}{(2\ell+1)(2\ell'+1)}\delta_{\pi_r}.\]
Also
$
\tilde{\pi_\ell}=\pi_\ell
$ and $h(\pi_\ell)=(2\ell+1)^2$ for all $\ell$.
\end{eg}

\vskip2.0em

\begin{eg}\label{eg:product-of-finite-groups}

Suppose that $\{G_i\}_{i\in\Ind}$ is a non-empty family of compact groups for arbitrary indexing set $\Ind$. Let  $G:=\prod_{i\in\Ind}G_i$ be the product of $\{G_i\}_{i\in \Ind}$ i.e. $
G = \{(x_i)_{i\in \Ind}:\ x_i\in G_i\}$
equipped with product topology. Then $G$ is a compact group and by \cite[Theorem~27.43]{he2},
\[
\widehat{G}=\{\pi=\bigotimes_{i\in\Ind}\pi_i:\ \text{such that}\ \pi_i\in\widehat{G}_i \ \text{and}\ \pi_i=\pi_0\ \text{except for finitely many $i\in\Ind$}\}
\] 
equipped with the discrete topology. Moreover, for each $\pi=\bigotimes_{i\in\Ind}\pi_i \in \widehat{G}$, $d_\pi=\prod_{i\in\Ind} d_{\pi_i}$. 

\noindent If $\pi_k=\bigotimes_{i\in\Ind} \pi_i^{(k)} \in \widehat{G}$ for $k=1,2$, one can show that
\[
\delta_{\pi_1}* \delta_{\pi_2}(\pi)= \prod_{i\in \Ind} \delta_{\pi_i^{(1)}} *_{\widehat{G}_i} \delta_{\pi_i^{(2)}}(\pi_i)\ \ \ \ \text{for}\   \pi=\bigotimes_{i\in\Ind}\pi_i \in \widehat{G},
\]
where $*_{\widehat{G}_i}$ is the hypergroup product in $\widehat{G}_i$ for each $i\in\Ind$. Also, each character $\chi$ of $G$ corresponds to a family of characters $(\chi_i)_{i\in\Ind}$ such that $\chi_i$ is a character of $G_i$ and $\chi(x)=\prod_{i\in\Ind}\chi_i(x_i)$
for each $x=(x_i)_{i\in\Ind}\in G$. Note that $\chi_1\equiv 1$ for all of $i\in\Ind$ except finitely many.

\end{eg}

\end{subsection}

\begin{subsection}{The Fourier algebra of the dual of  a compact group}\label{ss:Fourier-on-^G}

For a compact hypergroup $H$, first Vrem in \cite{vr} defined the {\it Fourier space} similar to the Fourier algebra of a compact group. Subsequently, Muruganandam, \cite{mu1}, defined the {\it Fourier-Stieltjes space} on an arbitrary (not necessary compact) hypergroup $H$ using irreducible representations of $H$   analogous  to the Fourier-Stieltjes algebra on locally compact groups.  Subsequently, he defines 
the { Fourier space} of hypergroup $H$, as a closed subspace of the Fourier-Stieltjes algebra, generated by $\{f*_h\tilde{f}:\; f\in L^2(H,h)\}$. Also Muruganandam shows that where $H$ is commutative $A(H)$ is 
$ \{f*_h\tilde{g}:\; f,g\in L^2(H,h)\}$
 and  $\norm{u}_{A(H)}=\inf  \norm{f}_2 \norm{g}_2$ for all $f,g\in L^2(H,h)$ such that  $u=f*\tilde{g}$. 
He calls hypergroup $H$ a {\it regular Fourier hypergroup}, if the Banach space $({A}(H),\norm{\cdot}_{{A}(H)})$ equipped with pointwise product is a Banach algebra.

We prove a hypergroup version of Lemma~\ref{l:A(G)-general-construction} which shows some important properties of the Banach space $A(H)$ for an arbitrary hypergroup $H$ (not necessarily a regular Fourier hypergroup) and  $\widehat{G}$ is a regular Fourier hypergroup. Some parts of the following Lemma have already been shown in \cite{vr} for compact
hypergroups and that proof is applicable to general hypergroups. Here we present a complete proof for the lemma.

\begin{lemma}\label{l:A(H)-properties}
Let $H$ be a hypergroup, $K$ a compact subset  of $H$ and $U$ an open subset of $H$ such that $K\subset U$. Then for each measurable set $V$  such that $0<h_H(V)<\infty$  and $\overline{K *V*\tilde{V}} \subseteq U$, there exists some $u_V\in A(H) \cap C_c(H)$ such that:\\
\begin{enumerate}
\item{$u_V(H)\geq 0$.}
\item{$u_V|_K=1$.}
\item{$\supp(u_V) \subseteq U$.}
\item{$\norm{u_V}_{A(H)} \leq \big( h_H(K*V)/{h_H(V)}\big)^{\frac{1}{2}}$.}
\end{enumerate}
\end{lemma}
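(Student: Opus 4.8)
The plan is to mimic the classical construction for the Fourier algebra of a group (Lemma~\ref{l:A(G)-general-construction}), but now working inside the hypergroup algebra $L^2(H,h_H)$ and using Muruganandam's description of $A(H)$ as the set of products $f*_h\tilde g$ with $f,g\in L^2(H,h)$. The natural candidate is to take indicator-type functions and convolve them: set $g=\mathbf{1}_V/h_H(V)$ and $f=\mathbf{1}_{K*V}$, and define $u_V:=f*_h\tilde g$. The idea is that convolving the indicator of $K*V$ against a normalized indicator of $V$ produces a continuous function that is identically $1$ on $K$, nonnegative everywhere, and supported inside $\overline{K*V*\tilde V}\subseteq U$, while the norm estimate falls out of the defining infimum formula $\norm{u}_{A(H)}=\inf\norm{f}_2\norm{g}_2$.

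First I would verify the three pointwise/support properties. Nonnegativity (property (1)) follows because $f,g$ are nonnegative and the hypergroup convolution preserves positivity (the measures $\delta_x*\delta_y$ are probability measures). For property (2), I would compute $u_V(x)=\int_H \mathbf{1}_{K*V}(t)\,L_{\tilde t}\tilde g(x)\,dh(t)$ for $x\in K$ and check that the normalization $h_H(V)^{-1}$ forces this to equal $1$; the key point is that for $x\in K$ the ``mass'' of $V$ that gets convolved lands entirely inside $K*V$, so the indicator $f$ is identically $1$ on the relevant region and the integral reduces to $h_H(V)^{-1}h_H(V)=1$. For property (3), I would show $\supp(u_V)\subseteq \overline{(K*V)*\tilde V}$, which is contained in $\overline{K*V*\tilde V}\subseteq U$ by hypothesis, using the fact that $\supp(\delta_x*\delta_y)$ controls where the convolution is nonzero.

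The norm bound (property (4)) is then immediate from the infimum formula: $\norm{u_V}_{A(H)}\le \norm{f}_2\,\norm{\tilde g}_2 = \norm{\mathbf{1}_{K*V}}_2\,\norm{\mathbf{1}_V}_2/h_H(V)$. Since $\norm{\mathbf{1}_{K*V}}_2=h_H(K*V)^{1/2}$ and $\norm{\mathbf{1}_V}_2=h_H(V)^{1/2}$, this collapses to $\big(h_H(K*V)/h_H(V)\big)^{1/2}$, exactly as claimed. I would also need to confirm $u_V\in C_c(H)$: compact support comes from property (3) together with $\overline{K*V*\tilde V}$ being compact (a compact set convolved with relatively compact sets stays relatively compact in a hypergroup), and continuity of $f*_h\tilde g$ for $L^2$ functions is a standard feature of the hypergroup algebra.

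The main obstacle I anticipate is the identity $u_V|_K\equiv 1$ in property (2). In the group case this is transparent because translation is measure-preserving and $L_x$ is a genuine shift; in a hypergroup the ``translation'' $L_{\tilde t}$ spreads mass according to $\delta_{\tilde t}*\delta_x$, so I must argue carefully that for $x\in K$ and $t$ ranging over $V$, the convolution $\delta_{\tilde t}*\delta_x$ is supported where $f=\mathbf{1}_{K*V}$ equals $1$, and that the total $h$-mass integrates correctly against the Haar measure to give exactly $1$ after normalization. This requires using the defining relation $x\in K,\ t\in V \Rightarrow \supp(\delta_x*\delta_t)\subseteq K*V$ and the left-invariance properties of $h$, and is the step where the hypergroup structure genuinely differs from the group setting. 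The positivity and norm steps, by contrast, should be routine.
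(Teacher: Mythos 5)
Your proposal is correct and follows essentially the same route as the paper: the same candidate $u_V=\frac{1}{h_H(V)}\,1_{K*V}*_h\tilde{1}_V$, positivity from positivity of the convolution, the support bound via $\supp(1_{K*V}*_h\tilde{1}_V)\subseteq\overline{K*V*\tilde{V}}$, and the norm estimate from the infimum formula $\norm{u}_{A(H)}\leq\norm{1_{K*V}}_2\norm{1_V}_2/h_H(V)$. You also correctly isolated the one genuinely hypergroup-specific step, which the paper handles exactly as you sketch: for $x\in K$ and $t\in V$ the probability measure $\delta_x*\delta_t$ has support in $K*V$, so $\langle 1_{K*V},\delta_x*\delta_t\rangle=1$, and the adjoint/invariance property of the Haar measure turns the defining integral into $h_H(V)^{-1}\int_V 1\,dh_H=1$.
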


\begin{proof}
Let us define
\[
u_V:=\frac{1}{h_H(V)} 1_{K*V} *_h \tilde{1}_{V}.
\]
Clearly $u_V\geq 0$. Moreover, for each $x\in K$ , 
\begin{eqnarray*}
 h_H(V) u_V(x) &=&   1_{K*V} *_h \tilde{1}_{V}(x)\\
&=& \int_{ H} 1_{K*V}(t) L_{\tilde{t}}\tilde{1}_V(x) dh_H(t) \\
&=&  \int_{ H} 1_{K*V}(t) L_{\tilde{x}} {1}_V(t) dh_H(t)\\
&=&  \int_{t\in H} L_{x}1_{K*V}(t)  {1}_V(t) dh_H(t)\ \ \ \text{(by \cite[Theorem~1.3.21]{bl})}\\
&=&  \int_{ V}   \langle 1_{K*V} , \delta_{{x}}*\delta_{t} \rangle dh_H(t)\\
&=& {h_H(V)}.
\end{eqnarray*}

\noindent Also \cite[Proposition~1.2.12]{bl} implies that 
 \[
 \supp( 1_{K*V} *_h \tilde{1}_{V})\subseteq \overline{\left(K*V *\tilde{V}\right)}\subseteq U.
 \]
Finally, by \cite[Proposition~2.8]{mu1}, we know that
\begin{eqnarray*}
\norm{u_V}_{A(H)}&\leq & \frac{\norm{1_{K*V}}_2 \norm{1_V}_2}{h_H(V)}= \frac{h_H(K*V)^{\frac{1}{2}} h_H(1_V)^{\frac{1}{2}}}{h_H(V)}= \frac{ h_H(K*V)^{\frac{1}{2}}}{h_H(V){\frac{1}{2}}}.
\end{eqnarray*}
\end{proof}

\begin{rem}\label{r:existence-of-the-V}
For each pair $K,U$ such that $K \subset U$,  we can always find  a relatively compact neighborhood  $V$ of $e_H$ that satisfies the conditions in Lemma~\ref{l:A(H)-properties}. But the proof is quite long and in our application the existence of such $V$ will be clear.
\end{rem}


Given a commutative hypergroup, it is not immediate that it is a regular Fourier hypergroup or not.
We will show that when $G$ is a compact group, the hypergroup $\widehat{G}$ is a regular Fourier hypergroup.
\begin{dfn}
For $A(G)$, Fourier algebra on $G$, we define 
\[
ZA(G):=\{f\in A(G): f(yxy^{-1})=f(x)\ \text{for all $x\in G$}\}.
\]
which is a Banach algebra with pointwise product and $\norm{\cdot}_{A(G)}$.
\end{dfn}

\begin{theorem}\label{t:Fourier-of-^G}
Let $G$ be a compact group. Then $\widehat{G}$ is a regular Fourier hypergroup and $A(\widehat{G})$ is isometrically  isomorphic with the center of the group algebra $G$, i.e. $A(\widehat{G})\cong ZL^1(G)$.
Moreover, the hypergroup algebra of $\widehat{G}$, $L^1(\widehat{G},h)$, is isometrically isomorphic with $ZA(G)$.
\end{theorem}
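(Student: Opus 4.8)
The plan is to realize both isomorphisms through the character expansion of central functions, treating Schur orthogonality as a Plancherel theorem for the commutative hypergroup $\widehat{G}$. First I would record the normalizations I need: for a central $f$, Schur's lemma gives $\hat{f}(\pi)=c_\pi(f)I_{{\cal H}_\pi}$, the orthogonality relations give $\hat{\chi_\sigma}(\pi)=d_\sigma^{-1}\delta_{\sigma\pi}I$ and $\chi_{\pi_1}\chi_{\pi_2}=\chi_{\pi_1\otimes\pi_2}=\sum_i m_i\chi_{\pi'_i}$, and Fourier inversion yields $f=\sum_\pi d_\pi c_\pi(f)\chi_\pi$. In particular $\{\chi_\pi\}_{\pi\in\widehat{G}}$ is an orthonormal basis of the central $L^2$-functions $ZL^2(G)$, so the map $\Phi\colon L^2(\widehat{G},h)\to ZL^2(G)$ given by $\Phi(F)=\sum_\pi F(\pi)d_\pi\chi_\pi$ is a surjective isometry, since $\norm{\Phi(F)}_2^2=\sum_\pi|F(\pi)|^2 d_\pi^2=\norm{F}_{L^2(\widehat{G},h)}^2$ (recall $h(\pi)=d_\pi^2$).

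The crucial computation is that $\Phi$ turns the hypergroup convolution into pointwise multiplication on $G$. Using Lemma~\ref{l:Haar-convolution-of-dirac-functions} together with (\ref{eq:hypergroup-convolution-on-^G}), one checks on point masses that $\delta_{\pi_1}*_h\delta_{\pi_2}(\sigma)=m_\sigma d_{\pi_1}d_{\pi_2}/d_\sigma$, where $m_\sigma$ is the multiplicity of $\sigma$ in $\pi_1\otimes\pi_2$, whence $\Phi(\delta_{\pi_1}*_h\delta_{\pi_2})=d_{\pi_1}d_{\pi_2}\chi_{\pi_1}\chi_{\pi_2}=\Phi(\delta_{\pi_1})\Phi(\delta_{\pi_2})$; this extends to $L^2$ by sesquilinearity and continuity. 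Consequently, for $F,G\in L^2(\widehat{G},h)$ the generator $F*_h\tilde{G}$ of $A(\widehat{G})$ corresponds to the product $\Phi(F)\Phi(\tilde{G})$ of two central $L^2$-functions, which lies in $ZL^1(G)$. I would make this precise at the level of Gelfand coefficients by verifying $(F*_h\tilde{G})(\sigma)=c_\sigma\big(\Phi(F)\Phi(\tilde{G})\big)$ directly on point masses, which sidesteps the fact that $F*_h\tilde{G}$ itself need not be square-integrable on $\widehat{G}$.

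Next I would show this correspondence is an isometric bijection onto $ZL^1(G)$. Surjectivity and the norm identity both come from the factorization $ZL^1(G)=ZL^2(G)\cdot ZL^2(G)$: given $f\in ZL^1(G)$, the functions $|f|^{1/2}$ and $\mathrm{sgn}(f)|f|^{1/2}$ are again central and square-integrable with product $f$, so $\inf\{\norm{\xi}_2\norm{\eta}_2\colon f=\xi\eta,\ \xi,\eta\in ZL^2(G)\}\le\norm{f}_1$, while Cauchy--Schwarz gives the reverse inequality. Transporting this through $\Phi$ (noting $\norm{G}_2=\norm{\tilde{G}}_2$) and the definition $\norm{u}_{A(\widehat{G})}=\inf\norm{F}_2\norm{G}_2$ shows $\norm{u}_{A(\widehat{G})}=\norm{\Phi(F)\Phi(\tilde{G})}_1$, i.e.\ the map $A(\widehat{G})\to ZL^1(G)$ is isometric. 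Finally, since the Gelfand transform $f\mapsto(c_\pi(f))_\pi$ satisfies $c_\pi(f*g)=c_\pi(f)c_\pi(g)$, it carries convolution on $ZL^1(G)$ to pointwise product on $\widehat{G}$; this simultaneously shows $A(\widehat{G})$ is closed under pointwise multiplication (so $\widehat{G}$ is a regular Fourier hypergroup) and that the isometry $A(\widehat{G})\cong ZL^1(G)$ is an algebra isomorphism. For the ``moreover'' statement I would run the parallel, easier $\ell^1$-type argument: using $\norm{f}_{A(G)}=\sum_\pi d_\pi\norm{\hat{f}(\pi)}_{1}=\sum_\pi|c_\pi|d_\pi^2$ for central $f\in A(G)$, the map $\Psi\colon L^1(\widehat{G},h)\to ZA(G)$, $\Psi(\phi)=\sum_\pi d_\pi\phi(\pi)\chi_\pi$, is an isometry because $\norm{\Psi(\phi)}_{A(G)}=\sum_\pi|\phi(\pi)|d_\pi^2=\norm{\phi}_{L^1(\widehat{G},h)}$, and the same point-mass computation shows it intertwines $*_h$ with the pointwise product of $ZA(G)$.

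The main obstacle I expect is the bookkeeping around the two distinct products living on $A(\widehat{G})$: the hypergroup convolution $*_h$, which enters only through the description of $A(\widehat{G})$ as a set of generators and through its norm, and the pointwise product, which is the actual Banach-algebra multiplication. Keeping these separate, together with the attendant convergence subtleties (the product $F*_h\tilde{G}$ need not be square-integrable, so $\Phi$ must be extended from finitely supported functions with the correct normalization $h(\pi)=d_\pi^2$ and the identification made through Gelfand coefficients), is where the real care lies; the representation-theoretic inputs---Schur orthogonality, the tensor-product decomposition, and the central factorization of $L^1$---are standard.
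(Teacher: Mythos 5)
Your proposal is correct and is essentially the paper's own argument run in the opposite direction: your $\Phi$ is the inverse of the paper's map ${\cal T}(f)=(\alpha_\pi)_{\pi}$ (where $\hat{f}(\pi)=\alpha_\pi I$ by the Schur-lemma argument), the decisive character/point-mass computation via Lemma~\ref{l:Haar-convolution-of-dirac-functions} and the tensor decomposition is identical, and both proofs conclude through Muruganandam's description of $A(\widehat{G})$ as $\{F*_h\tilde{G}:F,G\in L^2(\widehat{G},h)\}$ together with the $L^1$--$L^2$ factorization, with the same parallel argument for $L^1(\widehat{G},h)\cong ZA(G)$. The only substantive refinement is that you make explicit the central factorization $f=|f|^{1/2}\cdot\mathrm{sgn}(f)|f|^{1/2}$, which justifies $\norm{f}_1=\inf\norm{\xi}_2\norm{\eta}_2$ over factorizations \emph{within} $ZL^2(G)$ --- a point the paper asserts without proof.
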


\begin{proof}
Let $\cal F$ be the Fourier transform on $L^1(G)$.
We know that ${\cal F}|_{L^2(G)}$ is an isometric isomorphism from $L^2(G)$ onto $ {\cal L}^2(\widehat{G})$.
By the properties of the Fourier transform, \cite[Proposition~4.2]{du}, for each $f\in ZL^2(G)$ and $g\in L^1(G)$ we have
\begin{equation}\label{eq:ZL^2-commutes-with-everything}
{\cal F}(f)\circ {\cal F}(g)={\cal F}(f*g)={\cal F}(g*f)={\cal F}(g)\circ {\cal F}(f).
\end{equation}
So ${\cal F}(f)$ commutes with all elements of ${\cal C}_0(\widehat{G})$; 	therefore, ${\cal F}(f)=(\alpha_\pi I_{d_\pi \times d_\pi})_{\pi\in\widehat{G}}$ for a family of scalars $(\alpha_\pi)_{\pi \in \widehat{G}}$ in $\Bbb{C}$.
Hence,
\begin{eqnarray*}
\norm{{\cal F}f}_2^2 = \sum_{\pi\in\widehat{G}} d_\pi \norm{\widehat{f}(\pi)}_2^2 = \sum_{\pi\in\widehat{G}} d_\pi \alpha_\pi^2 \norm{I_{d_\pi\times d_\pi}}_2^2 = \sum_{\pi\in\widehat{G}}  \alpha_\pi^2 {d_\pi}^2 
= \sum_{\pi\in\widehat{G}}  \alpha_\pi^2 h(\pi).
\end{eqnarray*}

\noindent Using the preceding identity, we define $
{\cal T}: ZL^2(G) \rightarrow L^2(\widehat{G},h)$ by
 ${\cal T}(f)=(\alpha_\pi)_{\pi\in\widehat{G}}$. 
Note that $\{\chi_\pi\}_{\pi\in\widehat{G}}$ forms an orthonormal basis for $ZL^2(G)$. Since ${\cal F}(\chi_{\tilde{\pi}})=d_{\tilde{\pi}}^{-1} I_{d_{\tilde{\pi}\times {\tilde{\pi}}}}$, ${\cal T}(\tilde{f})={\cal T}(f\tilde{)}$
 for each $f\in L^2(G)$ where  $\tilde{f}(x)=f(x^{-1})$. 
So ${\cal T}$ is an isometric isomorphism from $ZL^2(G)$ onto $L^2(\widehat{G},h)$.

 We claim that ${\cal T}(f \tilde{g})={\cal T}(f) *_h {\cal T}(g\tilde{)}$
 for all $f,g\in ZL^2(G)$. 
To prove our claim it is enough to show that ${\cal T}(\chi_{\pi_1} \chi_{\pi_2})={\cal T}(\chi_{\pi_1}) *_h {\cal T}(\chi_{\pi_2})$ for  $\pi_1,\pi_2\in \widehat{G}$.  
 Therefore, using Lemma~\ref{l:Haar-convolution-of-dirac-functions}, for each two representations $\pi_1,\pi_2\in\widehat{G}$, we have
\begin{eqnarray*}
{\cal T}(\chi_{\pi_1}\chi_{\pi_2}) &=& {\cal T}\left( 
\sum_{i=1}^n m_i \chi_{\pi'_i}\right)\\
&=& \sum_{i=1}^n m_i {\cal T}(\chi_{\pi'_i})\\
&=& \sum_{i=1}^n m_i d_{\pi'_i}^{-1} \delta_{\pi'_i}\\
&=& d_{\pi_1}^{-1} \delta_{\pi_1} *_h  d_{\pi_2}^{-1}\delta_{\pi_2}\\
&=& {\cal T}(\chi_{\pi_1})*_h {\cal T}(\chi_{\pi_2}).
\end{eqnarray*}
\vskip1.0em

\noindent  Now we can define a surjective extension $
 {\cal T}:ZL^1(G) \rightarrow {A}(\widehat{G})$, using the fact that $lin\{\chi_\pi\}_{\pi\in\widehat{G}}$ is dense in $ZL^1(G)$ as well and  $\norm{f}_1=\inf \norm{g_1}_2\norm{g_2}_2$ for all $g_1,g_2\in L^2(G)$ such that $f=g_1\tilde{g_2}$. 
Using the definition of the norm of $A(\widehat{G})$, 
   $\norm{{\cal T}(f)}_{{A}(\widehat{G})}=\norm{f}_1$ for each $f\in ZL^1(G)$. To show that the extension of ${\cal T}$ is onto, for each pair  $g_1,g_2\in ZL^2(G)$, we note that $g_1\tilde{g_2}\in ZL^1(G)$.
So, ${\cal T}$ is an isometric isomorphism. This implies that ${A}(\widehat{G})$ is a Banach algebra with pointwise product and hence $\big({A}(\widehat{G}),\cdot,\norm{\cdot}_{{A}(\widehat{G})}\big)\cong \big(ZL^1({G}),*,\norm{\cdot}_{1}\big)$. 

\vskip1.5em

The second part is similar to the first part of the proof. This time we consider the restriction 
 of the Fourier transform from $ZA(G)$ onto $ {\cal L}^1(\widehat{G})$.
 Again by an argument similar to (\ref{eq:ZL^2-commutes-with-everything}), we define an isometric mapping $\cal T'$ from $ZA(G)$ onto $L^1(\widehat{G},h)$. Since $lin\{\chi_\pi\}_{\pi\in \widehat{G}}$ is dense in $ZA(G)$, we observe that  ${\cal T}'$ is an isometric isomorphism from $ZA(G)$ as an $*$-algebra with complex conjugate and pointwise product onto $L^1(\widehat{G},h)$ as an $*$-algebra with $*_h$ convolution.
\end{proof}

\vskip2em


\end{subsection}


\begin{section}{The Leptin condition on Hypergroups}\label{ss:Folner-condition-on-^G}

In the proof of Theorem~\ref{t:SA(G)-non-a-a-abelian}, we used the Leptin condition for amenable groups. In this subsection we study the Leptin condition for the dual of hypergroups. In \cite{sk}, the Reiter condition was introduced for amenable hypergroups. Although the Reiter condition on hypergroups is defined similar to amenable groups,   the Leptin condition is a problem for hypergroups. There are some attempts to answer to this question for some special hypergroups in \cite{la4}. Recall that for each two subsets $A$ and $B$ of $X$, we denote the set $(A\setminus B) \cup (B\setminus A)$ by $A\bigtriangleup B$.

\begin{dfn}\label{d:Folner-Leptin-condition}
Let $H$ be a hypergroup. { We say that $H$ satisfies the {\it Leptin condition} if for every  compact subset $K$ of $H$ and $\epsilon>0$, there exists a measurable set $V$ in $H$ such that $0<h(V)<\infty$ and $h(K*V)/h(V) < 1+\epsilon$.}
\end{dfn}

We will use the Leptin condition, in the case where $H$ is the dual of a compact group $G$, to study approximate amenability for Segal algebras on $G$.

\begin{rem}\label{r:relatively-compact}
In the definitions of the conditions mentioned above, we can suppose that $V$ is a compact measurable set. To show this fact suppose that $H$ satisfies the Leptin condition. For compact subset $K$ of $H$ and $\epsilon>0$, there exists a measurable set $V$ such that  $h(K*V)/h(V) < 1 + \epsilon$. Using regularity of $h$, we can find compact set $V_1\subseteq V$ such that $h(V\setminus V_1) < h(V)/n$ for some positive integer $n>0$. It implies that $0< h(V_1)$ and  $ h(V)/h(V_1) < {n}/{(n-1)}$. Therefore,
\[
\frac{h(K*V_1)}{h(V_1)} \leq \frac{h(V)}{h(V_1)} \left( \frac{h(K*V_1)}{h(V)}\right)< \frac{n}{n-1}(1+\epsilon).
\]
So we can add compactness of $V$ to the definition of the Leptin condition. 
\end{rem}

 Note that since the duals of compact groups are commutative, they are all amenable hypergroups, \cite{sk}, but as we mentioned it does not say anything about the Leptin condition on those hypergroups. So the next question is for which compact groups $G$ do the hypergroups $\widehat{G}$ satisfy those conditions.   We will now show that Examples~\ref{eg:SU(2)} and \ref{eg:product-of-finite-groups} sometimes satisfy these conditions.

\begin{proposition}\label{p:Leptin-SU(2)^}
The hypergroup $\widehat{SU(2)}$ satisfies the Leptin condition.
\end{proposition}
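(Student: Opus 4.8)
The plan is to exploit the \emph{polynomial growth} of the hypergroup $\widehat{SU(2)}$ and to use the initial segments $V_N=\{\pi_\ell:\ 0\le\ell\le N\}$ as F\o lner-type sets, in exact analogy with the way relatively compact neighbourhoods of $e$ are used in the group case. Since $\widehat{SU(2)}$ is discrete, any compact $K$ is finite, so $K\subseteq\{\pi_\ell:\ 0\le\ell\le L\}$ for some $L\in\{0,\tfrac12,1,\dots\}$; because enlarging $K$ only increases $h(K*V)$, it suffices to verify the condition for $K=\{\pi_\ell:\ 0\le\ell\le L\}$. Thus I fix such an $L$, take $\epsilon>0$, and look for $N$ large (so that $V_N$ is finite with $0<h(V_N)<\infty$) with $h(K*V_N)/h(V_N)<1+\epsilon$.

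First I would estimate $h(V_N)$. Using $h(\pi_\ell)=(2\ell+1)^2$ and summing over $\ell=0,\tfrac12,1,\dots,N$ (equivalently over $k=2\ell=0,1,\dots,2N$),
\[
h(V_N)=\sum_{k=0}^{2N}(k+1)^2=\sum_{j=1}^{2N+1}j^2=\frac{(2N+1)(2N+2)(4N+3)}{6},
\]
which is a cubic polynomial in $N$ with leading term $\tfrac{8}{3}N^3$. The second ingredient is a containment coming from the fusion rule in Example~\ref{eg:SU(2)}: since $\supp(\delta_{\pi_\ell}*\delta_{\pi_{\ell'}})=\{\pi_r:\ |\ell-\ell'|\le r\le\ell+\ell'\}$, every representation occurring in $K*V_N$ has index at most $L+N$, so $K*V_N\subseteq V_{N+L}$. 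As $h$ is a (weighted counting) measure, this gives $h(K*V_N)\le h(V_{N+L})$.

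Combining the two observations,
\[
\frac{h(K*V_N)}{h(V_N)}\le\frac{h(V_{N+L})}{h(V_N)}=\frac{(2N+2L+1)(2N+2L+2)(4N+4L+3)}{(2N+1)(2N+2)(4N+3)}\longrightarrow 1
\]
as $N\to\infty$ (the quotient behaves like $((N+L)/N)^3$ for fixed $L$). Hence for $N$ large enough it is below $1+\epsilon$, which is exactly the Leptin condition. There is no serious obstacle here: the whole argument rests on the fact that the ``balls'' $V_N$ grow polynomially while the fusion rule pushes the boundary out only by the bounded amount $L$, so the relative size of the annulus $V_{N+L}\setminus V_N$ vanishes. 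The only points requiring care are the reduction to an initial segment $K$ and the bookkeeping of the half-integer indexing; the growth estimate itself is a routine computation.
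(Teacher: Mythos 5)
Your proof is correct and follows essentially the same route as the paper's: both use the initial segments $\{\pi_\ell\}_{\ell\le N}$ as F\o lner-type sets, the $SU(2)$ fusion rules to confine $K*V_N$ to a slightly larger initial segment, and the cubic growth of $h$ to make the ratio tend to $1$. The only cosmetic difference is that the paper notes $K*V=\pi_k*V$ exactly (with $\pi_k$ the top element of $K$) and computes that ratio, whereas you bound $h(K*V_N)\le h(V_{N+L})$ by containment; the estimates are identical.
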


\begin{proof} 
Given compact subset $K$ of $\widehat{SU(2)}$ and $\epsilon>0$.
Let $K$ and $\epsilon>0$ are given. $k:=\sup\{\ell:\; \pi_\ell\in K\}$. We select $m \geq k$ such that  for $V=\{\pi_\ell\}_{\ell=0}^m$, 
\begin{eqnarray}\label{eq:proof-of-Leptin}
\frac{h(\pi_k *V)}{h(V)} &=&  \frac{ \sum_{\ell=1}^{2m+2k+1}\ell^2 }{\sum_{\ell=1}^{2m+1} \ell^2}\\
&=& \frac{\frac{1}{3}(2m+2k+1)^3 + \frac{1}{2}(2m+2k+1)^2 + \frac{1}{6}(2m+2k+1)}{\frac{1}{3}(2m+1)^3 + \frac{1}{2}(2m+1)^2  + \frac{1}{6}(2m+1)}< 1+ \epsilon.\nonumber
\end{eqnarray}
But also for each $x\in K$, $x*V\subseteq \pi_k*V$. 
So using (\ref{eq:proof-of-Leptin}),
\[
\frac{h(K *V)}{h(V)} = \frac{h(\pi_k *V)}{h(V)} < 1+ \epsilon.
\]
\end{proof}

\begin{proposition}\label{p:Leptin-for-Prod-of-finite-groups}
Let $\{G_i\}_{i\in \Ind}$ be a family of compact groups whose duals have the Leptin condition and $G=\prod_{i\in\Ind}G_i$ is their product equipped with product topology. Then $\widehat{G}$ satisfies the Leptin condition.
\end{proposition}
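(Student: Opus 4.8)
The plan is to reduce the statement about the product $G = \prod_{i \in \Ind} G_i$ to the Leptin condition on each factor $\widehat{G_i}$, exploiting the product structure of the convolution on $\widehat{G}$ recorded in Example~\ref{eg:product-of-finite-groups}. Recall from that example that an element $\pi \in \widehat{G}$ is a tensor product $\bigotimes_{i \in \Ind} \pi_i$ with $\pi_i = \pi_0$ for all but finitely many $i$, that $d_\pi = \prod_{i \in \Ind} d_{\pi_i}$, and hence $h(\pi) = d_\pi^2 = \prod_{i \in \Ind} h(\pi_i)$. Crucially, the convolution factorizes: $\delta_{\pi_1} * \delta_{\pi_2}(\pi) = \prod_{i \in \Ind} \delta_{\pi_i^{(1)}} *_{\widehat{G_i}} \delta_{\pi_i^{(2)}}(\pi_i)$. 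This means the support $\pi_1 * \pi_2$ in $\widehat{G}$ is exactly the product of the supports $\pi_i^{(1)} *_{\widehat{G_i}} \pi_i^{(2)}$ in the factors, so convolution of sets respects the product decomposition as well.

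First I would reduce to the case of a finite index set. Given a compact (hence, since $\widehat{G}$ is discrete, finite) subset $K \subseteq \widehat{G}$ and $\epsilon > 0$, every element of $K$ is supported on coordinates in some finite set; since $K$ is finite, there is a finite subset $\Ind_0 \subseteq \Ind$ such that every $\pi \in K$ has $\pi_i = \pi_0$ for $i \notin \Ind_0$. For the coordinates outside $\Ind_0$ I would simply take the trivial factor $\{\pi_0\}$, which contributes a factor of $1$ to every Haar-measure ratio and is fixed by convolution with $\pi_0$. So the problem genuinely lives over the finite set $\Ind_0$, and it suffices to build a Følner-type set there.

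Next, over the finite index set $\Ind_0$, I would project $K$ onto each factor: let $K_i$ be the (finite, hence compact) set of $i$-th coordinates appearing in elements of $K$, so that $K \subseteq \prod_{i \in \Ind_0} K_i$. Applying the Leptin condition in each $\widehat{G_i}$ (Definition~\ref{d:Folner-Leptin-condition}) with compact set $K_i$ and a tolerance $\delta$ chosen so that $(1+\delta)^{|\Ind_0|} < 1 + \epsilon$, I obtain measurable sets $V_i$ with $0 < h(V_i) < \infty$ and $h(K_i * V_i)/h(V_i) < 1 + \delta$. Then I would set $V := \prod_{i \in \Ind_0} V_i$ (times $\{\pi_0\}$ in the remaining coordinates). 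Using the multiplicativity of $h$ and the factorization of convolution, one gets
\[
\frac{h(K * V)}{h(V)} \leq \frac{h\big(\prod_i (K_i * V_i)\big)}{h\big(\prod_i V_i\big)} = \prod_{i \in \Ind_0} \frac{h(K_i * V_i)}{h(V_i)} < (1+\delta)^{|\Ind_0|} < 1 + \epsilon,
\]
where the first inequality uses $K * V \subseteq \prod_i (K_i * V_i)$, which follows from the coordinatewise factorization of the convolution support.

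The main obstacle I anticipate is justifying the inclusion $K * V \subseteq \prod_{i \in \Ind_0}(K_i * V_i)$ cleanly and confirming that the Haar measure is genuinely multiplicative across the (infinite) product in the precise sense needed — i.e.\ that the contribution of the infinitely many trivial coordinates is exactly $1$ and causes no convergence issue. Since only finitely many coordinates are nontrivial for any given element, each $h$-value is a finite product, so this should be routine once the bookkeeping over $\Ind \setminus \Ind_0$ is set up correctly; the real content is simply that Leptin passes through finite products with multiplicative Haar measure and factorized convolution. I would also invoke Remark~\ref{r:relatively-compact} if I want the $V_i$ compact, so that $V$ is compact as well.
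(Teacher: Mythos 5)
Your proposal is correct and follows essentially the same route as the paper: write $K$ inside a product $\bigotimes_{i\in F}K_i$ of compact sets over a finite index set (trivial coordinates elsewhere), apply the Leptin condition in each factor $\widehat{G_i}$, and take the product set $V$, using multiplicativity of the Haar measure and the coordinatewise factorization of convolution. In fact your version is slightly sharper: the paper's own proof stops at the bound $(1+\epsilon)^{|F|}$, which by itself does not meet the required threshold $1+\epsilon$, whereas your preliminary calibration of $\delta$ so that $(1+\delta)^{|\Ind_0|}<1+\epsilon$ is exactly the adjustment needed to finish the argument correctly.
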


\begin{proof}
Let $K \subseteq \widehat{G}$ be an arbitrary compact subset. Then, there exists some  $F\subseteq \Ind$ finite such that $K\subseteq \bigotimes_{i\in F} K_i \otimes E_F^c$ where $K_i$ is a compact subset of $\widehat{G}_i$ and $E_F^c=\bigotimes_{i\in\Ind\setminus F} \pi_0$ where $\pi_{0}$'s are the trivial representations of the corresponding $\widehat{G}_i$. 

Using the Leptin condition for each $G_i$, there exists some compact  set $V_i$ which satisfies the Leptin condition for $K_i$ and $\epsilon>0$   i.e. $h_{G_i}(K_i*V_i)<(1+ \epsilon) h_{G_i}(V_i)$. Therefore, for the compact set $V=(\bigotimes_{i\in F} V_i )\otimes E_F^c$,
\[
\frac{h(K*V)}{h(V)} \leq \prod_{i\in F} \frac{ h_{G_i}(K_i*V_i)}{h_{G_i}(V_i)} < (1+\epsilon)^{|F|}.
\]




\end{proof} 

\begin{rem}
If $G$ is finite then $\widehat{G}$ satisfies the Leptin condition; hence, for a family of finite groups say $\{G_i\}_{i\in\Ind}$, $G:=\prod_{i\in\Ind}G_i$, $\widehat{G}$ satisfies the Leptin condition. 
\end{rem}

\end{section}

\begin{section}{Segal algebras on compact groups whose duals satisfy Leptin condition}\label{ss:on-S^1(^SU(2))}

 In this section, we apply hypergroup approach to the original questions for Segal algebras. We show that every proper Segal algebra on ${G}$ is not approximately amenable if $G$ is a compact group if $\widehat{G}$ satisfies the Leptin condition.  First we need a general lemma for Banach algebras.

\begin{lemma}\label{l:characters-in-ZS^1(G)}
Let ${\cal A}$ be a Banach algebra and ${\cal J}$ be a dense left  ideal of ${\cal A}$. Then for each idempotent element $p$ in the center of  algebra ${\cal A}$  i.e. $p^2=p \in Z({\cal A})$, $p$ belongs to ${\cal J}$.
\end{lemma}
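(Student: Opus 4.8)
The plan is to combine the density of ${\cal J}$ with a Neumann-series argument in the central ``corner'' generated by $p$. The case $p=0$ is immediate, since $0\in{\cal J}$, so I would assume $p\neq 0$ throughout. The guiding idea is that although density only lets us approximate $p$ by elements of ${\cal J}$, the idempotence of $p$ lets us upgrade a \emph{sufficiently good} approximation into an exact factorization $p=b\cdot j$ with $j\in{\cal J}$, and then the left-ideal property finishes the job.

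First I would use density to pick $a\in{\cal J}$ with $\norm{p-a}<1/\norm{p}$. Because ${\cal J}$ is a left ideal and $p\in{\cal A}$, the product $pa$ again lies in ${\cal J}$. Using $p^2=p$ I would then estimate
\[
\norm{p-pa}=\norm{p(p-a)}\le \norm{p}\,\norm{p-a}<1 .
\]
Setting $c:=p-pa$, centrality of $p$ gives $pc=cp=c$ (here I would record the short computation $pap=pa$), so $p$ behaves as a two-sided identity on $c$ and all its powers. Since $\norm{c}<1$, the series $b:=\sum_{n\ge 0}c^{n}$ (with $c^{0}=p$) converges in the complete algebra ${\cal A}$, and the telescoping identity collapses to $b(pa)=p$; conceptually this is just the statement that $pa$ is invertible, with inverse $b$, in the unital Banach algebra $p{\cal A}p=p{\cal A}$ whose identity is $p$. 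Finally I would conclude $p=b(pa)\in{\cal A}\cdot{\cal J}\subseteq{\cal J}$, again because ${\cal J}$ is a left ideal.

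The step where centrality is genuinely needed—and thus the main point to get right—is the claim that $pa$ sits in the corner where it is invertible, i.e. that $pa=pap$, together with the relations $pc=cp=c$ that make the Neumann series telescope to $p$ rather than to a nonexistent global unit. Without centrality, $pa$ need not commute correctly with $p$ and the factorization $p=b(pa)$ breaks down. Everything else (convergence of the series, the norm estimate, and the final invocation of the left-ideal property) is routine, so I would concentrate the care on verifying $pap=pa$ and the telescoping computation.
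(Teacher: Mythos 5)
Your proof is correct and is essentially the paper's own argument: both pick $a\in{\cal J}$ with $\|p-a\|$ small, form a Neumann series $b$, use centrality and idempotence of $p$ to make the series telescope, and finish with the left-ideal property — indeed your identity $p=b(pa)$ is exactly the paper's $p=pba$, since centrality gives $p(p-a)^n=(p-pa)^n$ and hence your $b$ equals $p$ times the paper's $b$. The only cosmetic differences are your separate treatment of $p=0$ and the stronger approximation $\|p-a\|<1/\|p\|$, which the paper avoids by keeping the series in powers of $p-a$ and applying the factor $p$ only at the end (so that $\|p-a\|<1$ suffices).
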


\begin{proof}
 Since $\cal J$ is dense in $\cal A$, there exists an element $a \in \cal J$ such that $
\norm{ p - a}_{\cal A} <1$.
Let us define
\[
b:= p  + \sum_{n=1}^\infty (p - a)^{n}.
\]
One can check that $p  b -    p  b ( p - a) = p  b a$,
which is an element in $\cal J$.
On the other hand,
\begin{eqnarray*}
p  b -  p  b ( p - a) &=&  p  \left( p + \sum_{n=1}^\infty(p -a)^{ n}\right) - p \left( p + \sum_{n=1}^\infty  (p -a)^{ n} \right)   (p -a)\\
&=&  p +  p \sum_{n=1}^\infty(p -a)^{ n} - p \sum_{n=2}^\infty  (p -a)^{ n}- p (p -a)\\
&=&  p +  p (p -a) - p (p -a) =p.
\end{eqnarray*}
\end{proof}


  \vskip2.0em
The main theorem of this section is as follows.

\begin{theorem}\label{t:Segal-of-S^1(G)-G-Leptin}
Let $G$ be a compact group such that $\widehat{G}$ satisfies the  Leptin condition. Then every proper Segal algebra on $G$ is not approximately amenable.
\end{theorem}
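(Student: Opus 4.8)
The plan is to reproduce the mechanism of Theorem~\ref{t:SA(G)-non-a-a-abelian}, but transported to the dual hypergroup $\widehat{G}$, and to feed the outcome into the criterion of Theorem~\ref{t:SUM}. Concretely, I would manufacture a sequence $(v_n)$ in the centre of $S^1(G)$ which is bounded in the $L^1$-norm (so that it is multiplier-bounded in $S^1(G)$), which telescopes under convolution, $v_n*v_{n+1}=v_n=v_{n+1}*v_n$, yet is unbounded in the Segal norm $\norm{\cdot}_{S^1}$. The reason for working in the centre is that Theorem~\ref{t:Fourier-of-^G} identifies $(ZL^1(G),*)$ isometrically with $(A(\widehat{G}),\cdot)$, turning pointwise multiplication on $\widehat{G}$ into convolution on $G$; thus the \emph{convolution} telescoping I need on $G$ becomes \emph{pointwise} telescoping of bump functions on $\widehat{G}$, which is exactly what the Leptin condition and Lemma~\ref{l:A(H)-properties} deliver.

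First I would build the bumps on $\widehat{G}$. Fix $D>1$. Given a finite (hence compact) set $L\subseteq\widehat{G}$, the Leptin hypothesis on $\widehat{G}$ together with Lemma~\ref{l:A(H)-properties} yields $u\in A(\widehat{G})\cap C_c(\widehat{G})$ with $0\le u\le 1$, $u|_{L}\equiv 1$, finite support, and $\norm{u}_{A(\widehat{G})}\le D$. Pulling $u$ back through the isometric isomorphism of Theorem~\ref{t:Fourier-of-^G} gives $v\in ZL^1(G)$ with $\norm{v}_1=\norm{u}_{A(\widehat{G})}\le D$; since $u$ is finitely supported on the discrete set $\widehat{G}$, $v$ is a finite linear combination of the characters $\chi_\pi$, and each central idempotent $d_\pi\chi_\pi$ lies in $S^1(G)$ by Lemma~\ref{l:characters-in-ZS^1(G)}, so $v\in S^1(G)$. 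Running this inductively with an increasing family of finite sets $L_n$ (with $\supp u_{n-1}\subseteq L_n$) produces $(v_n)\subseteq S^1(G)$ with $\norm{v_n}_1\le D$; because $u_{n+1}\equiv 1$ on $\supp u_n$ one has the pointwise identity $u_nu_{n+1}=u_n$, which transports to $v_n*v_{n+1}=v_n$, and centrality gives the symmetric relation.

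To force unboundedness in $\norm{\cdot}_{S^1}$ I would exploit properness as before. Since $S^1(G)$ is proper, $\norm{\cdot}_{S^1}$ and $\norm{\cdot}_1$ are inequivalent; using that the trigonometric polynomials (which lie in $S^1(G)$, because every matrix coefficient $\pi_{ij}$ satisfies $\pi_{ij}=(d_\pi\chi_\pi)*\pi_{ij}$ and $S^1(G)$ is a left ideal containing $d_\pi\chi_\pi$) are dense in $(S^1(G),\norm{\cdot}_{S^1})$, a standard feature of Segal algebras on compact groups, the ratio $\norm{\cdot}_{S^1}/\norm{\cdot}_1$ is already unbounded on them, so I may select trigonometric polynomials $f_n\in S^1(G)$ with $n\norm{f_n}_1\le\norm{f_n}_{S^1}$. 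Choosing $L_n$ above to contain the finite Fourier support of $f_n$ makes $u_n\equiv 1$ there, and since a central element acts on each isotypic block as a scalar this yields $v_n*f_n=f_n$. If now $\sup_n\norm{v_n}_{S^1}=C'<\infty$, the abstract Segal inequality gives $\norm{f_n}_{S^1}=\norm{f_n*v_n}_{S^1}\le C\norm{f_n}_1\norm{v_n}_{S^1}\le CC'\norm{f_n}_1$, contradicting $n\norm{f_n}_1\le\norm{f_n}_{S^1}$ for large $n$; hence $(v_n)$ is $\norm{\cdot}_{S^1}$-unbounded. Meanwhile $\norm{v_n*b}_{S^1}\le C\norm{v_n}_1\norm{b}_{S^1}\le CD\norm{b}_{S^1}$ shows $(v_n)$ is multiplier-bounded, so Theorem~\ref{t:SUM} applies and $S^1(G)$ is not approximately amenable.

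The main obstacle I anticipate is the passage between the two algebra structures and the bookkeeping it forces: the bump construction and the Leptin estimate are naturally pointwise statements on $\widehat{G}$, whereas Theorem~\ref{t:SUM} concerns convolution on $G$, and it is only the isometric identification $A(\widehat{G})\cong ZL^1(G)$ converting pointwise product into convolution that lets one family of bumps serve both purposes. Subsidiary care is needed to check that finitely supported elements of $A(\widehat{G})$ genuinely land in $S^1(G)$ (this is precisely where Lemma~\ref{l:characters-in-ZS^1(G)} is essential) and that the trigonometric polynomials are $\norm{\cdot}_{S^1}$-dense, so that properness can be witnessed by functions of finite Fourier support; the commutativity coming from centrality is then what allows these witnesses $f_n$, which need not be central, to be absorbed by the central bumps $v_n$.
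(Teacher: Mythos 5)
Your proposal is correct, and its first half coincides with the paper's own construction: the Leptin condition on $\widehat{G}$ plus Lemma~\ref{l:A(H)-properties} produce $\norm{\cdot}_{A(\widehat{G})}$-bounded, finitely supported bumps with nested supports, and pulling these back through the isometry $\cal T$ of Theorem~\ref{t:Fourier-of-^G} — landing inside $S^1(G)$ by Lemma~\ref{l:characters-in-ZS^1(G)} — gives a central, $\norm{\cdot}_1$-bounded, hence multiplier-bounded, telescoping family ready for Theorem~\ref{t:SUM}. (Incidentally, Lemma~\ref{l:A(H)-properties} only guarantees $u\geq 0$, not $u\leq 1$, but you never use the upper bound.) Where you genuinely diverge is the unboundedness step. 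The paper argues softly: assuming the pulled-back net were $\norm{\cdot}_{S^1}$-bounded, it invokes the Kotzmann--Rindler theorem (compact groups are SIN, so $S^1(G)$ has a central, $\norm{\cdot}_1$-bounded approximate identity $(e_\alpha)$, \cite{ko}) together with the estimate $\norm{e_\alpha * u_K}_{S^1}\leq \norm{e_\alpha}_1\norm{u_K}_{S^1}$ to conclude that $(e_\alpha)$ would be $\norm{\cdot}_{S^1}$-bounded, contradicting Burnham's theorem \cite{bu} that a proper Segal algebra has no bounded approximate identity. You instead transplant the witness argument of Theorem~\ref{t:SA(G)-non-a-a-abelian}: trigonometric polynomials $f_n$ with $n\norm{f_n}_1\leq \norm{f_n}_{S^1}$, Fourier supports swallowed by the bumps, and the absorption identity $v_n * f_n = f_n$ forcing $\norm{v_n}_{S^1}\to\infty$. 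Both routes work. Yours is more self-contained and parallels the amenable-group case more faithfully, but it leans on one claim you assert rather than prove: that trigonometric polynomials are $\norm{\cdot}_{S^1}$-dense in $S^1(G)$, without which norm-inequivalence could not be witnessed by finitely supported functions. That claim is standard and fillable with tools already at hand: translation continuity gives $\norm{e_U * f - f}_{S^1}\to 0$ for $e_U=\lambda(U)^{-1}1_U$; Peter--Weyl gives a trigonometric polynomial $p$ with $\norm{p-e_U}_1$ small; then $\norm{p*f-e_U*f}_{S^1}\leq \norm{p-e_U}_1\norm{f}_{S^1}$, and $p*f$ is again a trigonometric polynomial lying in $S^1(G)$. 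So the omission is citable rather than structural. What the paper's argument buys is precisely that it never needs this density fact; what yours buys is independence from the SIN-group and bounded-approximate-identity machinery.
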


\begin{proof}
Let $S^1(G)$ be a proper Segal algebra on $G$.
By Lemma~\ref{l:characters-in-ZS^1(G)}, $S^1(G)$ contains all central idempotents $d_\pi\chi_\pi$ for each $\pi\in \widehat{G}$.
If $\cal T$ is the map defined in the proof of Theorem~\ref{t:Fourier-of-^G}, then ${\cal T}^{-1}(\delta_\pi)=d_\pi\chi_\pi\in S^1(G)$. So in order to use Theorem~\ref{t:SUM}, we look for a suitable sequence in $A(\widehat{G})$ with compact supports.
\vskip1.0em

Fix $D>1$. Using the Leptin condition on $\widehat{G}$,  for every arbitrary non-void compact set $K$ in $\widehat{G}$, we can find a finite subset $V_K$ of $\widehat{G}$ such that $h(K*V_K)/h(V_K)<D^{2}$. Using Lemma~\ref{l:A(H)-properties}  for \[
v_K:=\frac{1}{h(V_K)} 1_{K *V_K} *_h \tilde{1}_{V_K}
\] 
we have  $\norm{v_K}_{A(\widehat{G})} < D$ and $v_{K}|_{K}\equiv 1$.
\noindent We consider the net $\{v_K: K\subseteq \widehat{G}\ \text{compact}\}$ in $A(\widehat{G})$ where 
$v_{K_1} \preccurlyeq v_{K_2}$ whenever $v_{K_1}v_{K_2} = v_{K_1}$.
So $(v_K)_{ K\subseteq \widehat{G}}$ forms a $\norm{\cdot}_{A(\widehat{G})}$-bounded net in $A(\widehat{G}) \cap c_c(\widehat{G})$.
 Let $f\in A(\widehat{G})\cap c_c(\widehat{G})$ with $K=\supp f$. Then $v_K f= f$. Therefore, $(v_K)_{K\subseteq \widehat{G}}$ is a bounded approximate identity of $A(\widehat{G})$.

 Using ${\cal T}$ defined in the proof of Theorem~\ref{t:Fourier-of-^G}, we can define the net $(u_K)_{K\subseteq \widehat{G}}$ in $S^1(G)$ by $ u_K:={{\cal T} }^{-1}(v_K)$.
 Now, we show that $(u_K)_{K\subseteq \widehat{G}}$ satisfies all the conditions of the Theorem~\ref{t:SUM}. 
First of all, since ${\cal T}$ is an isometry from $ZL^1(G)$ onto $A(\widehat{G})$, $(u_K)_{K\subseteq \widehat{G}}$ is a $\norm{\cdot}_1$-bounded sequence in $S^1(G)$. Therefore, it forms a multiplier-bounded sequence in the Segal algebra.
Moreover, since ${\cal T}$ is an isomorphism, 
\[
u_{K_1} * u_{K_2} = {\cal T}^{-1}(v_{K_1}) * {\cal T}^{-1}(v_{K_2})={\cal T}^{-1} (v_{K_1} v_{K_2}) = {\cal T}^{-1}(v_{K_1}) = u_{K_1}
\]
for $u_{K_1}\preccurlyeq u_{K_2}$.
Toward a contradiction, suppose that $
 \sup_{K\subseteq \widehat{G}} \norm{u_K}_{S^1(G) }\leq C$
  for some $C>0$. We know that for a Segal algebra $S^1(G)$, the group $G$ is a SIN group if and only if $S^1(G)$ has
a central approximate identity which is bounded in $L^1$-norm, \cite{ko}.
So let $(e_\alpha)_{\alpha}$ be a central  approximate identity of $S^1(G)$ which is $\norm{\cdot}_1$-bounded. Since $ZS^1(G)=\overline{lin\{\chi_n\}_{n\in\Bbb{N}}}^{\norm{\cdot}_{S^1(G)}}$
and $(u_K)_{K\subseteq \widehat{G}}$ is a $\norm{\cdot}_1$-bounded approximate identity for $ZL^1(G)=\overline{lin\{\chi_n\}_{n\in\Bbb{N}}}^{\norm{\cdot}_{1}}$,   we can show that for each $\alpha$,  $\norm{e_\alpha}_{S^1(G)} =  \lim_{K \rightarrow \widehat{G}} \norm{e_\alpha * u_K}_{S^1(G)}$.
Consequently,
\[
 \norm{e_\alpha}_{S^1(G)} = \lim_{K \rightarrow \widehat{G}} \norm{e_\alpha * u_K}_{S^1(G)} \leq \lim_{K \rightarrow \widehat{G}}  \norm{e_\alpha}_1 \norm{u_K}_{S^1(G)} \leq C \norm{e_\alpha}_1.
\]  
Hence, $(e_\alpha)_\alpha$ is $\norm{\cdot}_{S^1(G)}$-bounded. But, a Segal algebra  
  cannot have a bounded approximate identity unless it coincides with the group algebra, see \cite{bu}, which contradicts the properness of $S^1(G)$. Hence, $(u_K)_{K\subseteq \widehat{G}}$ is not $\norm{\cdot}_{S^1(G)}$-bounded. So by Theorem~\ref{t:SUM}, $S^1(G)$ is not approximately amenable.

\end{proof}

\begin{cor}\label{c:Segal-of-S^1(SU(2))}
Every proper Segal algebra on $SU(2)$ is not approximately amenable.
\end{cor}

\begin{cor}\label{c:Segal-of-Product-G_i}
Let $\{G_i\}_{i\in\Ind}$ be a non-empty family of compact groups whose duals satisfy the Leptin condition, and $G=\prod_{i\in\Ind}G_i$ equipped with product topology. Then every proper Segal algebra on $G$ is not approximately amenable.
\end{cor}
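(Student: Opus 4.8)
The plan is to observe that this corollary follows by chaining together two results already established in the paper, namely Proposition~\ref{p:Leptin-for-Prod-of-finite-groups} and Theorem~\ref{t:Segal-of-S^1(G)-G-Leptin}, with a preliminary check that the hypotheses of the latter are met. First I would record that $G=\prod_{i\in\Ind}G_i$ is itself a compact group: this is the standard fact that an arbitrary product of compact groups, equipped with the product topology, is compact (Tychonoff), and it is already noted in Example~\ref{eg:product-of-finite-groups}. Thus $G$ is a legitimate input to the machinery of Section~\ref{ss:on-S^1(^SU(2))}.

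Next I would invoke Proposition~\ref{p:Leptin-for-Prod-of-finite-groups}. By hypothesis each dual $\widehat{G}_i$ satisfies the Leptin condition, so that proposition applies verbatim and yields that the dual hypergroup $\widehat{G}$ satisfies the Leptin condition. This is precisely the remaining hypothesis required to feed $G$ into the main theorem of the section. With both ingredients in hand — $G$ compact and $\widehat{G}$ Leptin — I would then apply Theorem~\ref{t:Segal-of-S^1(G)-G-Leptin} directly to conclude that every proper Segal algebra on $G$ fails to be approximately amenable.

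I do not expect any genuine obstacle here, since all of the analytic difficulty has already been absorbed into the two cited results: Proposition~\ref{p:Leptin-for-Prod-of-finite-groups} handles the stability of the Leptin condition under products, while Theorem~\ref{t:Segal-of-S^1(G)-G-Leptin} supplies the non-approximate-amenability conclusion via the multiplier-bounded sequence criterion of Theorem~\ref{t:SUM}. The only point deserving a word of care is the purely topological one — confirming that the infinite product $G$ is compact so that the framework of compact groups and their dual hypergroups genuinely applies — but this is immediate and already flagged in Example~\ref{eg:product-of-finite-groups}. Consequently the proof is essentially a one-line deduction, and the statement is best presented simply as a corollary combining the proposition and the theorem.
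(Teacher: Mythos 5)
Your proposal is correct and matches the paper's (implicit) argument exactly: the corollary is intended as an immediate consequence of Proposition~\ref{p:Leptin-for-Prod-of-finite-groups} (stability of the Leptin condition under products) combined with Theorem~\ref{t:Segal-of-S^1(G)-G-Leptin}, just as you describe. Your added remark that $G$ is compact by Tychonoff's theorem is a reasonable piece of bookkeeping that the paper leaves unstated.
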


\end{section}

\begin{section}{Further questions}

\begin{itemize}
\item{For which other compact groups do their duals satisfy the Leptin condition? 
 The best suggestions to study the Leptin condition for them are Lie groups which are the natural next step after $SU(2)$.}
\item{For locally compact groups, the existence of a bounded approximate identity of the Fourier algebra  implies the Leptin condition. In the hypergroup case, it seems that we cannot prove this implication. Can we find a hypergroup $H$ whose Fourier algebra has a bounded approximate identity while $H$ does not satisfies the Leptin condition.}
\end{itemize}


\end{section}

\end{section}
\vskip2.0em
\noindent {\bf Acknowledgements}\\
The author was supported by Dean's Ph.D. scholarship at the University of Saskatchewan. The author would like to express his deep gratitude to Yemon Choi and Ebrahim Samei, his supervisors, for their kind helps and constant encouragement.

\footnotesize

 \noindent
   Mahmood Alaghmandan\\
          Department of Mathematics and Statistics,\\
          University of Saskatchewan,\\
          Saskatoon, SK S7N 5E6, CANADA\\
          E-mail: \texttt{mahmood.a@usask.ca}\\


\begin{thebibliography}{99}



\bibitem{bl} 
{\sc Bloom, Walter R.; Heyer, Herbert}, Harmonic analysis of probability measures on hypergroups.de Gruyter Studies in Mathematics, 20. {\it Walter de Gruyter \& Co., Berlin}, 1995.

\bibitem{bu} {\sc Burnham, J. T.},  Closed ideals in subalgebras of Banach algebras. I, {\it Proc. Amer. Math.
Soc.} {\bf 32} (1972), 551-–555.

\bibitem{ch} {\sc Choi, Y.; Ghahramani, F.}, Approximate amenability of Schatten classes, Lipschitz algebras and second duals of Fourier algebras. {\it Q. J. Math.} {\bf 62} (2011) 39–-58.


\bibitem{da}
{\sc Dales, H. G.; Loy, R. J.}, Approximate amenability of semigroup algebras and Segal algebras. {\it Dissertationes Math. (Rozprawy Mat.)} {\bf 474} (2010), 58 pp.

\bibitem{du}
{\sc Dunkl, Charles F.; Ramirez, Donald E.}, Topics in harmonic analysis. Appleton-Century Mathematics Series. Appleton-Century-Crofts [Meredith Corporation], New York, 1971.



\bibitem{ey} {\sc Eymard, P.}, L'alg\'{e}bre de Fourier d'un groupe localement compact.
{\it Bull. Soc. Math. France} {\bf 92} (1964) 181-–236.

\bibitem{gl} {\sc  Ghahramani, F.; Lau, A. T. M.}, Weak amenability of certain classes of Banach algebras without bounded approximate identities. {\it Math. Proc. Cambridge Philos. Soc.} {\bf 133} (2002) 357-–371.


\bibitem{aa}
{\sc Ghahramani, F.; Loy, R. J.}, Generalized notions of amenability. {\it J. Funct. Anal.} {\bf 208} (2004), no. 1, 229-–260.

\bibitem{he2}
{\sc Hewitt, Edwin; Ross, Kenneth A.}, Abstract harmonic analysis. Vol. II: Structure and analysis for compact groups. Analysis on locally compact Abelian groups. Die Grundlehren der mathematischen Wissenschaften, Band 152 {\it Springer-Verlag, New York-Berlin} 1970.


\bibitem{la4}
{\sc H\"{o}sel, Volker; Hofmann, Michael; Lasser, Rupert}, Means and F\o lner condition on polynomial hypergroups. {\it Mediterr. J. Math.} {\bf 7 }(2010), no. 1, 75-–88.





\bibitem{ko} {\sc   Kotzmann E.;  Rindler H.}, Segal algebras on non-abelian groups, {\it Trans. Amer. Math.
Soc.} {\bf 237} (1978), 271-–281.




\bibitem{mu1}
{\sc Muruganandam, Varadharajan}, Fourier algebra of a hypergroup. I. {\it J. Aust. Math. Soc.} {\bf 82} (2007), no. 1, 59–-83.

\bibitem{pi}
{\sc Pier, Jean-Paul}, Amenable locally compact groups. Pure and Applied Mathematics (New York). A Wiley-Interscience Publication. {\it John Wiley \& Sons, Inc., New York}, 1984. 





\bibitem{re} {\sc Reiter, H.; Stegeman, J. D.},
Classical harmonic analysis and locally compact groups. 
Second edition. London Mathematical Society Monographs. New Series, 22. {\it The Clarendon Press, Oxford University Press, New York}, 2000.

\bibitem{ric}
{\sc  Rickart, C. E.}, General theory of Banach algebras. The University Series in Higher Mathematics {\it D. van Nostrand Co., Inc., Princeton, N.J.-Toronto-London-New York} 1960. 
 


\bibitem{sk}
{\sc Skantharajah, Mahatheva}, Amenable hypergroups. {\it Illinois J. Math.} {\bf 36} (1992), no. 1, 15-–46.




\bibitem{vr}
{\sc Vrem, R. C.}, Harmonic analysis on compact hypergroups, Pacific. J. Math. 85 (1979), 239-–251.


\end{thebibliography}
\end{document}